\date{\today}
\newcommand{\Crm}{\mathrm{C}}
\newcommand{\Lrm}{\mathrm{L}}
\newcommand{\Mrm}{\mathrm{M}}
\newcommand{\Nrm}{\mathrm{N}}
\newcommand{\Lcal}{\mathcal{L}}
\newcommand{\Mcal}{\mathcal{M}}
\newcommand{\Mbf}{\mathbf{M}}
\newcommand{\absb}[1]{\bigl|#1\bigr|}
\newcommand{\altnorm}[1]{{\left\vert\kern-0.25ex\left\vert\kern-0.25ex\left\vert #1 \right\vert\kern-0.25ex\right\vert\kern-0.25ex\right\vert}}
\newcommand{\eps}{\varepsilon}
\newcommand{\dpr}[1]{\langle #1 \rangle}
\newcommand{\dprb}[1]{\bigl\langle #1 \bigr\rangle}
\newcommand{\tbf}{\mathbf{t}}
\newcommand{\pbf}{\mathbf{p}}
\theoremstyle{definition} \newtheorem{definition}{Definition}[section]
\theoremstyle{definition} \newtheorem{remark}[definition]{Remark}
\theoremstyle{plain} \newtheorem{lemma}[definition]{Lemma}
\theoremstyle{plain} \newtheorem{proposition}[definition]{Proposition}
\theoremstyle{plain} \newtheorem{theorem}[definition]{Theorem}
\theoremstyle{plain} 
\theoremstyle{definition} 
\theoremstyle{plain} 
\theoremstyle{definition} 
\theoremstyle{plain} 
\theoremstyle{plain}
\DeclareMathOperator{\dive}{div}
\DeclareMathOperator{\Lip}{Lip}
\DeclareMathOperator{\Wedge}{{\textstyle\bigwedge}}
\DeclareMathOperator{\spn}{span}
\newcommand{\ee}{\mathrm{e}}
\newcommand{\sbullet}{\begin{picture}(1,1)(-0.5,-2.5)\circle*{2}\end{picture}}
\newcommand{\frarg}{\,\sbullet\,}
\newcommand{\R}{\mathbb{R}}
\newcommand{\N}{\mathbb{N}}
\newcommand{\Id}{\mathrm{id}}
\renewcommand{\L}{\mathscr L}
\newcommand{\dd}{\mathrm{d}}
\newcounter{counter}
\newcommand{\curr}[1]{[\![#1]\!]}
\newcommand{\T}{T}
\newcommand{\weaksto}{\overset{*}{\rightharpoonup}}
\newcommand{\Dscr}{\mathscr{D}}
\renewcommand{\L}{\mathscr L}
\theoremstyle{plain} \newtheorem*{theorem*}{Theorem}
\theoremstyle{plain} 
\theoremstyle{plain} \newtheorem*{mthm*}{Main Theorem}
\theoremstyle{plain} \newtheorem*{conjecture*}{Conjecture}
\theoremstyle{plain} 
\theoremstyle{plain} \newtheorem*{problem*}{Problem}
\newcommand{\bbb}[1]{\llbracket #1 \rrbracket}
\numberwithin{equation}{section}
\definecolor{shadecolor}{rgb}{0.94, 0.97, 1.0}
\DeclareRobustCommand{\intprod}{%
	\mathbin{\mathpalette\int@prod{(0.1,0)(0.9,0)(0.9,0.8)}}}
\DeclareRobustCommand{\restrict}{%
	\mathbin{\mathpalette\int@prod{(0.1,0.8)(0.1,0)(0.9,0)}}}	
\newcommand{\int@prod}[2]{%
	\begingroup
	\sbox\z@{$\m@th#1+$}%
	\setlength\unitlength{\wd\z@}%
	\begin{picture}(1,1)
		\roundcap
		\polyline#2
	\end{picture}%
	\endgroup
}
\title[Existence and uniqueness for the Lipschitz transport of currents]{Existence and uniqueness for the transport of currents by Lipschitz vector fields}
\author{Paolo Bonicatto}
\address[P.\ Bonicatto]{Mathematics Institute, University of Warwick,
	Zeeman Building, CV4 7HP Coventry, UK}
\email{Paolo.Bonicatto@warwick.ac.uk}
\author{Giacomo Del Nin}
\address[G.\ Del Nin]{Max Planck Institute for Mathematics in the Sciences, Inselstrasse 22, 04103 Leipzig, Germany}
\email{giacomo.delnin@mis.mpg.de}
\author{Filip Rindler}
\address[F.\ Rindler]{Mathematics Institute, University of Warwick,
	Zeeman Building, CV4 7HP Coventry, UK}
\email{f.rindler@warwick.ac.uk}
\begin{document}
	\begin{abstract} This work establishes the existence and uniqueness of solutions to the initial-value problem for the geometric transport equation
		\[
		\frac{\dd}{\dd t}T_t+\Lcal_b T_t=0
		\]
		in the class of $k$-dimensional integral or normal currents $T_t$ ($t$ being the time variable) under the natural assumption of Lipschitz regularity of the driving vector field $b$. Our argument relies crucially on the notion of decomposability bundle introduced recently by Alberti and Marchese. In the particular case of $0$-currents, this also yields a new proof of the uniqueness for the continuity equation in the class of signed measures.
		\vskip.3truecm
		\noindent 
		
		\textsc{\footnotesize Keywords}: currents, continuity equation, Lipschitz functions, Lie derivative, decomposability bundle, uniqueness.
		\vskip.1truecm
		\noindent \textsc{\footnotesize 2020 Mathematics Subject Classification}: 49Q15, 35Q49.
	\end{abstract}
	\maketitle
	\setcounter{tocdepth}{1}
	
	\section{Introduction}
	The purpose of this paper is to establish a well-posedness result for the initial-value problem for the geometric transport equation
	\begin{equation}\label{eq:GTE} \tag{GTE}
		\frac{\dd}{\dd t}T_t+\Lcal_b T_t=0.
	\end{equation}
	Here $T_t$ is a time-parametrised family of normal $k$-currents in $\R^d$, and $b\colon \R^d \to \R^d$ is an (autonomous) Lipschitz vector field. Moreover, $\Lcal_b$ stands for the Lie derivative operator taking a normal $k$-current $T$ to its ``derivative along the flow of $b$'', namely another $k$-current (which a-priori is not necessarily normal, or even of finite mass) given via the (dual) Cartan formula
	\[
	\Lcal_b T:=-b\wedge\partial T-\partial(b\wedge T).
	\]
	Weak solutions to~\eqref{eq:GTE} (see below) can thus be understood as the transport of the currents $T_t$ along the flow of $b$. 
	
	For different dimensionalities $k$, the geometric transport equation occurs in many physical models. In the two extremal cases $k=0$ and $k=d$, the equation reduces respectively to the \textit{continuity equation}
	\begin{equation}\label{eq:intro_continuity}
		\frac{\dd}{\dd t}\mu_t+\dive(b\mu_t)=0
	\end{equation}
	for a family of signed measures $\mu_t$ on $\R^d$, and to the \textit{transport equation}
	\begin{equation}\label{eq:intro_transport}
		\frac{\dd}{\dd t}u_t+b\cdot\nabla u_t=0
	\end{equation}
	for a family of functions $u_t \colon \R^d\to\R$. For $k = 1$ one recovers the equations of dislocation transport~\cite{AA,AT,HudsonRindler22}. Further remarks on applications and explicit reductions to these coordinate PDEs can be found in~\cite{BDR}.
	
	More precisely, we understand the geometric transport equation in the following sense: A time-parametrised family $(T_t)_{t \in (0,1)}$, of normal $k$-currents in $\R^d$ is called a weak solution to~\eqref{eq:GTE} if the integrability condition
	\begin{equation}\label{eq:assumption_integrability_masses}
		\int_0^1 \Mbf(T_t)+\Mbf(\partial T_t) \;\dd t<\infty
	\end{equation}
	and
	\begin{equation}\label{eq:PDE_weak_formulation}
		\int_0^1 \dprb{ T_t,\omega} \, \psi'(t) \;\dd t+\int_0^1 \dprb{ \partial (b\wedge T_t) + b \wedge \partial T_t, \omega} \, \psi(t) \;\dd t=0
	\end{equation}
	hold for all $\psi\in \Crm^1_c((0,1))$ and all $\omega\in \Dscr^k(\R^d)$.
	According to~\cite[Lemma 3.5(i)]{BDR} we can assume that the map $t\mapsto T_t$ is weakly$^*$-continuous. Therefore, it makes sense to consider the initial-value problem 
	\begin{equation}\label{eq:initial_value_problem}
		\left\{\begin{aligned}
			\frac{\dd}{\dd t}T_t+\Lcal_b T_t &=0,  \qquad t \in (0,1),\\
			T_0 &=\overline T,
		\end{aligned}\right.
	\end{equation}
	where the initial condition is understood in the sense that $\overline T$ is the weak$^*$ limit of $T_t$ as $t\to 0$.
	We emphasise that the time interval
	is $[0,1]$ throughout this work for reasons of simplicity only. With minor adaptations, the proofs work also for any (possibly unbounded) time interval.  
	
	The existence and uniqueness theory of the transport equation~\eqref{eq:intro_transport} is by now well-developed, even under weak regularity assumptions on the vector field $b$ (see, e.g.,~\cite{DPL,AmbrosioBV,AmbrosioCrippa,AGS}).
	On the other hand, first well-posedness results for~\eqref{eq:GTE} in the case of a general dimension $k$ have only recently been shown in~\cite{BDR} and only for \emph{smooth} vector fields $b$. 
	In the present work, we will however show that a Lipschitz vector field $b$ is sufficient to ensure existence and uniqueness of the initial-value problem for the geometric transport equation~\eqref{eq:GTE} (see Section~\ref{sc:prelims} for notation). This regularity class is the natural one to consider in the framework of normal or integral currents. 
	
	The main difficulty in extending the results in \cite{BDR} to the Lipschitz setting is the following: Solutions to~\eqref{eq:GTE} can be understood as the transport of the currents $T_t$ along the flow lines of $b$. However, since the flow of $b$ is merely Lipschitz, it may well occur that it is not (fully) differentiable \emph{anywhere} on the support of the transported currents, therefore questioning the possibility of performing pointwise computations (which is the strategy adopted in the smooth case). 
	
	Our main result reads as follows: 
	
	\begin{theorem}\label{thm:main} 
		Let $b\colon \R^d \to \R^d$ ($d \in \N$) be a globally bounded and Lipschitz vector field with flow $\Phi_t = \Phi(t,\frarg) \colon \R^d \to \R^d$ and let $\overline T \in \Nrm_k(\R^d)$ be a $k$-dimensional normal current on $\R^d$. Then, the initial-value problem
		\[
		\left\{\begin{aligned}
			\frac{\dd}{\dd t}T_t+\Lcal_b T_t &=0,  \qquad t \in (0,1),\\
			T_0 &=\overline T
		\end{aligned}\right.
		\]
		admits a solution $(T_t)_{t \in (0,1)} \subset \Nrm_k(\R^d)$ of normal $k$-currents, which is unique in the class of normal $k$-currents. The solution is given by the pushforward of the initial current under the flow, namely, $T_t=(\Phi_t)_*\overline T$. In particular, if $\overline T$ is integral, then so is $T_t$, $t \in (0,1)$.
	\end{theorem}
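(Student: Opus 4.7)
The plan is to establish existence by the explicit pushforward $T_t := (\Phi_t)_* \overline{T}$ and uniqueness by a reverse-flow duality argument, with the Alberti--Marchese decomposability bundle supplying the tangential differentiability of $b$ needed in the Lipschitz setting. For existence, the fact that $b$ is bounded and Lipschitz gives (by Gr\"onwall) $\Lip(\Phi_t) \le e^{t \Lip(b)}$ on $[0,1]$, hence the uniform estimates
\[
\Mbf(T_t) \le e^{k \Lip(b)} \Mbf(\overline{T}), \qquad \Mbf(\partial T_t) \le e^{(k-1) \Lip(b)} \Mbf(\partial \overline{T}),
\]
using $\partial (\Phi_t)_* = (\Phi_t)_* \partial$. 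Thus $T_t \in \Nrm_k(\R^d)$, the integrability condition \eqref{eq:assumption_integrability_masses} holds, and weak$^*$-continuity in $t$ (in particular $T_0 = \overline{T}$) follows from the joint continuity of the flow. Integrality is preserved because Lipschitz pushforward preserves integral currents.

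To verify \eqref{eq:PDE_weak_formulation} for $T_t$, I mollify: $b^\varepsilon := \rho_\varepsilon * b$ satisfies $\|b^\varepsilon\|_\infty \le \|b\|_\infty$ and $\Lip(b^\varepsilon) \le \Lip(b)$, and by the smooth-case result of~\cite{BDR}, $T^\varepsilon_t := (\Phi^\varepsilon_t)_* \overline{T}$ solves \eqref{eq:GTE} with $b^\varepsilon$ in place of $b$. Standard ODE stability gives $\Phi^\varepsilon_t \to \Phi_t$ locally uniformly in $(t,x)$, so $T^\varepsilon_t \weaksto T_t$ uniformly in $t \in [0,1]$ with the mass bounds above holding uniformly in $\varepsilon$. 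Passing to the limit in \eqref{eq:PDE_weak_formulation}: the term $\langle T^\varepsilon_t, \omega \rangle \psi'(t)$ converges by dominated convergence, while
\[
\langle b^\varepsilon \wedge T^\varepsilon_t,\, d\omega \rangle = \langle T^\varepsilon_t,\, i_{b^\varepsilon}(d\omega) \rangle \;\longrightarrow\; \langle T_t,\, i_b(d\omega) \rangle = \langle b \wedge T_t,\, d\omega \rangle,
\]
and similarly for $\langle b^\varepsilon \wedge \partial T^\varepsilon_t, \omega \rangle$, thanks to the uniform convergence $b^\varepsilon \to b$ on compacts combined with the uniform mass bounds.

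For uniqueness, by linearity it suffices to show that a normal-current solution with $T_0 = 0$ vanishes identically. My aim is to prove that $t \mapsto (\Phi_{-t})_* T_t$ is constant in $t$; then $(\Phi_{-t})_* T_t = T_0 = 0$ and, applying $(\Phi_t)_*$ (the bi-Lipschitz inverse of $(\Phi_{-t})_*$), we conclude $T_t = 0$. Formally,
\[
\frac{\dd}{\dd t} (\Phi_{-t})_* T_t \;=\; \mathcal{L}_b \bigl[(\Phi_{-t})_* T_t\bigr] + (\Phi_{-t})_* \frac{\dd}{\dd t} T_t \;=\; 0,
\]
by GTE together with the commutation $(\Phi_{-t})_* \mathcal{L}_b = \mathcal{L}_b (\Phi_{-t})_*$. \emph{This} is the hard part: since $\Phi_t$ can fail to be differentiable anywhere on $\spt T_t$, neither the time derivative of $(\Phi_{-t})_* T_t$ nor the Cartan formula for $\mathcal{L}_b T_t$ is accessible by naive pointwise calculus, and the commutation has no immediate smooth analogue. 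The decomposability bundle of Alberti and Marchese provides the missing ingredient: for a normal $k$-current $T = \tau |T|$ one has $\spn(\tau(x)) \subseteq V(|T|, x)$ at $|T|$-a.e.\ $x$, so $b$ is pointwise differentiable $|T|$-a.e.\ along every direction tangent to $T$. This tangential differentiability is precisely what allows one to evaluate $\mathcal{L}_b T_t$ pointwise along $\spt T_t$ and to justify the chain rule through $\Phi_{-t}$, thereby closing the reverse-flow duality and yielding uniqueness.
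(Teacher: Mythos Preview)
Your existence argument by mollification is correct; the paper itself notes (in a remark following the existence proof) that this approximation route works and avoids the decomposability bundle for that half of the theorem.

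The uniqueness sketch, however, has a genuine gap. You correctly identify the target --- showing that $t\mapsto (\Phi_{-t})_*T_t$ is constant --- but the formal product rule
\[
\frac{\dd}{\dd t}\bigl[(\Phi_{-t})_*T_t\bigr]=\Lcal_b\bigl[(\Phi_{-t})_*T_t\bigr]+(\Phi_{-t})_*\frac{\dd}{\dd t}T_t
\]
is exactly the step that cannot be justified as written. The time derivative $\tfrac{\dd}{\dd t}T_t$ exists only in the weak sense of~\eqref{eq:PDE_weak_formulation}, and applying the merely Lipschitz pushforward $(\Phi_{-t})_*$ to that distributional identity does not yield a distributional identity for $(\Phi_{-t})_*T_t$; nor is the commutation $(\Phi_{-t})_*\Lcal_b=\Lcal_b(\Phi_{-t})_*$ available without smoothness. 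Your use of the decomposability bundle is also misdirected: applying~\eqref{eq:span_in_bundle} to the \emph{spatial} current $T_t$ gives tangential differentiability of $b$ along $\spn(\vec T_t)$, but differentiability of $b$ does not let you ``evaluate $\Lcal_bT_t$ pointwise'' (the term $\partial(b\wedge T_t)$ is only distributional), and it says nothing about the differentiability of the \emph{flow} $\Phi_{-t}$ jointly in $(t,x)$, which is what the product rule above would actually require.

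The paper closes this gap by working in space-time. One first reduces to $\partial T_t=0$, then shows that the $(k{+}1)$-current $U:=[(1,b)\wedge\vec T]\,\L^1\otimes\|T_t\|$ is \emph{normal} in $(0,1)\times\R^d$ (this is where the weak equation is used). Applying~\eqref{eq:span_in_bundle} to $U$ --- not to $T_t$ --- gives $(1,b(x))\wedge\vec T(t,x)\in\Wedge_{k+1}V(\|U\|,(t,x))$, so the Lipschitz map $\Psi^{-1}(s,y)=(s,\Phi_{-s}(y))$ is differentiable $\|U\|$-a.e.\ in all the directions that matter. The explicit pushforward formula (Lemma~\ref{lemma:pushforward_currents}) then lets one compute $W:=(\Psi^{-1})_*U$ pointwise, and $\partial W=0$ unwinds to $\int_0^1\alpha'(t)\,\dprb{(\Phi_{-t})_*T_t,\beta}\,\dd t=0$. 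The space-time current is precisely the device that replaces your unjustified product rule; without it (or an equivalent construction) the argument does not close.
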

	
	The proof of this theorem is carried out in Sections~\ref{sc:exist} and~\ref{sc:unique}. It employs the space-time approach introduced in~\cite{HudsonRindler22,Rindler23,BDR}, whose central idea is to consider the $(1+k)$-current in space-time $\R \times \R^d$ whose slices are $\delta_t \times T_t$.
	
	Another interesting feature of our argument is that we rely on a relatively recent tool from Geometric Measure Theory, namely the notion of \textit{decomposability bundle}, introduced by Alberti and Marchese~\cite{AM}. This tool ensures that, while full differentiability of $\Phi_t$ on the support of $\overline T$ may fail, the derivative of $\Phi_t$ still exists in a sufficiently good sense to define the pushforward $(\Phi_t)_*\overline T$ \emph{pointwise} (and not via the homotopy formula). 
	
	Our proof seems to be new even in the case $k = 0$, i.e., for the continuity equation, where well-posedness results in the Lipschitz setting are well-known (see, e.g. \cite{AGS} and also \cite{AB_osgood,BG,clop}). In fact, in this case our proof can be simplified somewhat (in particular, no multilinear algebra is required) and we present the argument separately in the last Section~\ref{sc:continuity}.

	\subsection*{Acknowledgements}
	
	This project has received funding from the European Research Council (ERC) under the European Union's Horizon 2020 research and innovation programme, grant agreement No 757254 (SINGULARITY). The authors thank Elio Marconi for a useful discussion which led to the content of Remark~\ref{rem:no_dec_bundle}. 
	
	\section{Notation and preliminaries} \label{sc:prelims}
	
	This section fixes our notation and recalls some basic facts. We refer the reader to~\cite{Federer69book,KrantzParks08book} for proofs.
	
	\subsection{Linear and multilinear algebra.}
	Let $d \in \N$ be the ambient dimension. We will often use the projection maps $\tbf \colon\R\times \R^d\to \R$, $\pbf \colon\R\times \R^d\to \R^d$ from the (Euclidean) space-time $\R\times\R^d$ onto the time and space variables, respectively, which are given as
	\begin{equation}\label{eq:projections_def}
		\tbf(t,x) = t,  \qquad
		\pbf(t,x) = x.
	\end{equation}
	We also define, for every given $t\in\R$, the immersion map $\iota_t\colon \R^d \to \R\times\R^d$ by
	\[
	\iota_t(x):=(t,x),  \qquad (t,x) \in \R\times \R^d.
	\]
	
	If $V$ is a (finite-dimensional, real) vector space, for every $k \in \N$, we let $\Wedge^k V$ be the space of $k$-covectors on $V$, and $\Wedge_k(V)$ be the space of $k$-vectors on $V$. 
	We denote the duality pairing between $v\in\Wedge_k V$ and $\alpha\in\Wedge^k V$ by $\dprb{ v,\alpha}$. 
	Referring to~\cite[Section 5.8]{AM}, given a $k$-vector $v$ in $V$, we denote by $\spn(v)$ the smallest linear subspace $W$ of $V$ such that $v \in \Wedge_k(W)$. A similar definition is given for a $k$-covector $\alpha$ in $V$. 
	
	Whenever $V$ is an inner product space, we can endow $\Wedge_k V$ with an inner product (Euclidean) norm $|\cdot|$ by declaring $\ee_I:=\ee_{i_1}\wedge\ldots \wedge \ee_{i_k}$, as $I$ varies in the $k$-multi-indices of $\{1,\ldots, n\}$, as \textbf{orthonormal} whenever $\ee_1,\ldots,\ee_n$ are an orthonormal basis of $V$. A simple $k$-vector $\eta\in\Wedge_k V$ is called a \textbf{unit} if there exists an orthonormal family $v_1,\ldots,v_k$ such that $\eta=v_1\wedge\ldots\wedge v_k$, or equivalently if its Euclidean norm $|\eta|$ equals 1. We define the \textbf{comass} of $\alpha\in\Wedge^kV$ as
	\[
	\|\alpha\|:=\sup\bigl\{\dprb{ \eta,\alpha}: \eta\in\Wedge_k V, \text{ simple, unit}\bigr\}
	\]
	and the \textbf{mass} of $\eta\in\Wedge_kV$ as
	\[
	\|\eta\|:=\sup\bigl\{\dprb{ \eta,\alpha}: \alpha\in\Wedge^k V,\,\|\alpha\|\leq 1\bigr\}.
	\]
	We have that $\|\eta\|=|\eta|$ if and only if $\eta$ is simple (and the same holds for covectors). From the definition it follows that
	\begin{equation}\label{eq:cauchy_schwartz_mass_comass}
		\absb{\dprb{ \eta,\alpha}}\leq \,\|\eta\|\|\alpha\|,\qquad\text{for every $\eta\in\Wedge_k V$, $\alpha\in\Wedge^k V$.}
	\end{equation}

	\begin{lemma}\label{lemma:wedge_perp}\phantom{.}
		\begin{enumerate}
			\item[(i)] Let $\tau\in\Wedge_1 \R^n$, $\sigma\in\Wedge_k(\R^n)$ and $\alpha\in\Wedge^1(\R^n)$, $\beta\in\Wedge^k(\R^n)$. Suppose that $\dprb{ v,\alpha}=0$ for every $v\in \mathrm{span}(\sigma)$. Then
			\[
			\dprb{ \tau\wedge\sigma,\alpha\wedge\beta}=\dprb{\tau,\alpha} \dprb{ \sigma,\beta}.
			\]
			\item[(ii)] Let $\tau\in\Wedge_1 \R^n$, $\sigma\in\Wedge_k(\R^n)$ and $\alpha\in\Wedge^{k+1}(\R^n)$. Suppose that $\tau\perp \mathrm{span}(\alpha)$.
			Then,
			\[
			\dprb{ \tau\wedge\sigma,\alpha}=0.
			\]
		\end{enumerate}
	\end{lemma}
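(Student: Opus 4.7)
The plan is to invoke the standard interior product (contraction) and reduce both statements to a vanishing identity that follows from a basis expansion adapted to the span. For a covector $\alpha \in \Wedge^1 \R^n$, let $\iota_\alpha \colon \Wedge_p \R^n \to \Wedge_{p-1} \R^n$ be defined by duality via $\langle \iota_\alpha \eta, \omega \rangle = \langle \eta, \alpha \wedge \omega \rangle$ for every $\omega \in \Wedge^{p-1} \R^n$; a direct check on decomposable elements yields the graded Leibniz rule $\iota_\alpha(v \wedge \eta) = \langle v, \alpha\rangle\, \eta - v \wedge \iota_\alpha \eta$. The analogous contraction $\iota_v \colon \Wedge^p \R^n \to \Wedge^{p-1} \R^n$ of a vector on a covector is defined dually by $\langle \eta, \iota_v \omega \rangle = \langle v \wedge \eta, \omega \rangle$ and obeys the same type of Leibniz rule.

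For (i), I would rewrite
\[
\langle \tau \wedge \sigma, \alpha \wedge \beta\rangle = \langle \iota_\alpha(\tau \wedge \sigma), \beta\rangle = \langle \tau, \alpha\rangle \langle \sigma, \beta\rangle - \langle \tau \wedge \iota_\alpha \sigma, \beta\rangle,
\]
so it suffices to prove $\iota_\alpha \sigma = 0$. Pick a basis $e_1, \ldots, e_m$ of $W := \spn(\sigma)$ and extend it to a basis of $\R^n$; by the definition of $\spn(\sigma)$, one has $\sigma \in \Wedge_k W$, so $\sigma$ is a linear combination of wedges $e_{i_1} \wedge \cdots \wedge e_{i_k}$ with $i_j \le m$. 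Iterating the Leibniz rule, $\iota_\alpha$ applied to each such wedge is an alternating sum whose terms each contain a factor $\langle e_{i_j}, \alpha\rangle$; these vanish by the assumption that $\alpha$ annihilates $W$, hence $\iota_\alpha \sigma = 0$ and (i) follows.

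Part (ii) is the dual statement. Using the identity $\langle \tau \wedge \sigma, \alpha\rangle = \langle \sigma, \iota_\tau \alpha\rangle$, it reduces to showing $\iota_\tau \alpha = 0$. I would choose an orthonormal basis $e_1, \ldots, e_m$ of $W := \spn(\alpha)$ and extend it to an orthonormal basis of $\R^n$, with dual basis $\{e^i\}$. The defining property of $\spn(\alpha)$ forces $\alpha$ to be a linear combination of wedges $e^{i_1} \wedge \cdots \wedge e^{i_{k+1}}$ with $i_j \le m$, while the hypothesis $\tau \perp W$ yields $\langle \tau, e^{i_j}\rangle = 0$ for $i_j \le m$; the Leibniz expansion of $\iota_\tau$ on each such wedge thus vanishes identically.

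No real obstacle is expected: both parts depend only on the graded Leibniz rule for the contraction and on writing the multivector (resp.\ the covector) in a basis of its span. The only notational care is in correctly identifying $\spn(\alpha)$ as the smallest subspace $W \subset \R^n$ such that $\alpha \in \Wedge^{k+1} W$ (via the inner-product identification of $W$ with $W^*$), which is precisely the condition dual to $\sigma \in \Wedge_k W$ used in (i).
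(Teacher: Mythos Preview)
Your argument is correct, but it is organized differently from the paper's. The paper proves~(i) by reducing, via linearity, to the case where $\sigma=v_1\wedge\cdots\wedge v_k$ with $v_i\in\spn(\sigma)$ and $\beta=\beta_1\wedge\cdots\wedge\beta_k$ is simple; it then invokes the determinant formula $\dprb{v_0\wedge\cdots\wedge v_k,\beta_0\wedge\cdots\wedge\beta_k}=\det(\dprb{v_i,\beta_j})$ and performs a Laplace expansion along the first column, where all entries except $\dprb{\tau,\alpha}$ vanish by hypothesis. Part~(ii) is dismissed as analogous. Your route instead packages the cancellation into the single identity $\iota_\alpha\sigma=0$ (respectively $\iota_\tau\alpha=0$), obtained from the graded Leibniz rule for the interior product; this treats (i) and (ii) symmetrically as dual statements and avoids reducing $\beta$ to a simple covector. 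The trade-off is that the paper's version is entirely elementary (only determinants), while yours assumes the reader is comfortable with the contraction $\iota$ and its antiderivation property---a small price for a cleaner and more uniform argument.
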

	
	\begin{proof}
		\noindent\textit{Ad~(i).} Let $V:=\spn(\tau_2)$, and let $v_1,\ldots,v_m$ be a basis of $V$. We can write $\sigma$ as
		\[
		\sigma=\sum_I \sigma_I v_I
		\]
		where $I=\{i_1,\ldots,i_k\}$ runs among all $k$-multi-indices of $\{1,\ldots,m\}$ and $v_I:=v_{i_1}\wedge\ldots\wedge v_{i_k}$. By linearity, it is sufficient to prove the claim when $\sigma=v_I$ for some $k$-multi-index $I$ (without loss of generality, up to a relabeling we can assume that $I=\{1,\ldots,k\}$) and when $\beta=\beta_1\wedge\ldots\wedge \beta_k$ for some $\beta_1,\ldots,\beta_k\in\R^n$. In this case, setting for notational simplicity $\tau=v_0$ and $\alpha=\beta_0$, we have
		\begin{align*}
			\dprb{\tau\wedge\sigma,\alpha\wedge \beta}& =\dprb{ v_0\wedge v_{1}\wedge\ldots\wedge v_{k},\beta_0\wedge \beta_1\wedge\ldots\wedge\beta_k}\\
			&=\det(\dprb{ v_i,\beta_j})_{i,j=0,\ldots, k}\\
			&=\dprb{ v_0,\beta_0} \det(\dprb{ v_i,\beta_j})_{i,j=1,\ldots, k}\\
			&=\dprb{\tau,\alpha}\dprb{ \sigma,\beta},
		\end{align*}
		where we used Laplace's determinant expansion on the first column ($j=0$) and the assumption that $\dprb{ v_i,\alpha}=0$ for $i=1,\ldots,k$.
		
		\medskip\noindent\textit{Ad~(ii).} The proof is very similar to the proof of~(i).
	\end{proof}

	Given a linear map $S\colon V\to W$, we define $\Wedge^k S \colon \Wedge_k V\to \Wedge_k W$ by
	\[
	( \Wedge^k S) (v_1\wedge \ldots\wedge v_k)= (Sv_1)\wedge\ldots\wedge (Sv_k),
	\]
	on simple vectors and then we extend this definition by linearity to all of $\Wedge_k V$. If there is no risk of confusion, we will often write simply $S$ instead of $\Wedge^{k}S$ to denote the extension of the map $S$ to $\Wedge^k V$.  
	
	We denote by $\Dscr^k(\R^d)$ the space of smooth $k$-forms on $\R^d$ with compact support. The integer $k$ will also be called the \textbf{degree} of $\omega \in \Dscr^k(\R^d)$ and will be written as $\deg{(\omega)}$. 	As usual, $d\omega \in \Dscr^{k+1}(\R^d)$ denotes the \textbf{exterior differential} of a $k$-form $\omega \in \Dscr^k(\R^d)$. 	We recall in particular the following Leibniz formula holds for any pair $\alpha, \beta$ of differential forms:
	\[
	d(\alpha\wedge\beta)=d\alpha \wedge \beta+(-1)^{\deg(\alpha)} \alpha\wedge d\beta.
	\]
	The \textbf{comass of a form} $\omega\in \Dscr^k(\R^d)$ is
	\[
	\|\omega\|_\infty:= \sup_{x\in \R^d} \|\omega(x)\|.
	\]

	The \textbf{pullback of a covector} $\alpha\in\Wedge^kV$ with respect to a linear map $S:V\to W$ is given by
	\[
	\dprb{  v_1\wedge\ldots\wedge v_k,S^* \alpha} := \dprb{  (Sv_1)\wedge\ldots\wedge (Sv_k),\alpha}
	\]
	on simple $k$-vectors and then extended by linearity. Therefore,
	\[
	\dprb{ \eta,S^*\alpha}= \dprb{(\Wedge^k S) \eta,\alpha}.
	\]
	If $f\colon \R^d \to  \R^d$ is differentiable and proper (meaning that preimages of compact sets are themselves compact) and $\omega\in \Dscr^k(\R^d)$, we define the \textbf{pullback} $f^*\omega \in \Dscr^k(\R^d)$ to be the differential form $f^*\omega$ given by
	\[
	\dprb{ v,(f^*\omega)(x)}:= \dprb{ Df(x)[v],\omega(f(x))}, \qquad v\in \Wedge_k(\R^d).
	\]
	The properness of the pullback map $f$ can be omitted if the pullback form $f^* \omega$ is always integrated against a current of compact support. Here, we usually use $\tbf$ and $\pbf$ as pullback maps, which are not proper, but in all instances the compound expressions in which they appear are compactly supported and no issue of well-definedness arises. For instance, in an expression of the form $\dprb{S, \tbf^* \alpha \wedge \pbf^* \beta }$ with $S \in \Dscr_{1+k}(\R \times \R^d)$ and $\alpha \in \Dscr^0((0,1))$, $\beta \in \Dscr^k(\R^d)$ the product $\tbf^* d\alpha \wedge\pbf^* \beta$ is compactly supported in $\R \times \R^d$ even though its two factors $\tbf^* \alpha$, $\pbf^* \beta$ are not. 
	We see from~\eqref{eq:cauchy_schwartz_mass_comass} that for every $k$-vector $v \in \Wedge_k \R^d$,
	\begin{equation}\label{eq:pullback_mass_comass}
		\absb{\dprb{ v,(f^*\omega )(x)}} \leq \|Df(x)[v]\|\cdot\|\omega (f(x))\| , \qquad  x \in \R^d.
	\end{equation}
	
	\subsection{Currents} We refer to~\cite{Federer69book} for a comprehensive treatment of the theory of currents, summarising here only the main notions that we will need. The space of \textbf{$k$-dimensional currents} $\mathscr D_k(\R^d)$ is defined as the dual of $\Dscr^k(\R^d)$, where the latter space is endowed with the locally convex topology induced by local uniform convergence of all derivatives. Then, the notion of \textbf{(sequential weak*) convergence} is the following: 
	\begin{equation*}
		T_n \weaksto T \text{ in the sense of currents } \Longleftrightarrow \dprb{ T_n, \omega} \to \dprb{ T, \omega} \quad  \text{for all $\omega \in \Dscr^k(\R^d)$.}
	\end{equation*}
	
	The \textbf{boundary} of a current is defined as the adjoint of De Rham's differential: if $T$ is a $k$-current, then $\partial T$ is the $(k-1)$-current given by 
	\begin{equation*}
		\dprb{ \partial T, \omega } = \dprb{ T, d\omega },   \qquad  \omega \in \Dscr^{k-1}(\R^d). 
	\end{equation*}
	We denote by $\Mrm_k(\R^d)$ the space of $k$-\textbf{currents with finite mass} in $\R^d$, where the \textbf{mass} of a current $T \in \mathscr{D}_k(\R^d)$ is defined as 
	\begin{equation*}
		\mathbf{M}(T):= \sup \left\{ \dprb{ T,\omega }:  \omega \in \Dscr^k(\R^d), \| \omega\|_\infty \le 1 \right\}.
	\end{equation*}
	
	Let $\mu$ be a finite measure on $\R^d$ and let ${\tau} \colon \R^d \to \Wedge_k(\R^d)$ be a map in ${\rm L}^1(\mu)$. Then we define the current $T:={\tau}\mu$ as 
	\begin{equation*}
		\dprb{ T, \omega } = \int_{\R^d} \dprb{ \tau(x),\omega(x) }\; \dd \mu(x).
	\end{equation*}
	We recall that all currents with finite mass can be represented as $T= \tau \mu$ for a suitable pair $\tau, \mu$ as above. In the case when $\|\tau\|=1$ $\mu$-a.e., we denote $\mu$ by $\|T\|$ and we call it the \textbf{mass measure} of $T$. As a consequence, we can write $T=\vec{T}\|T\|$, where $\|\vec{T}\|=1$ $\|T\|$-almost everywhere. One can check that, if $T=\tau\mu$ with $\tau\in \Lrm^1(\mu)$, then $\|T\|=\|\tau\| \mu$, hence
	\[
	\mathbf{M}(T) = \int_{\R^d} \|\tau(x)\|\; \dd \mu(x).
	\]
	Given a current $T=\tau \mu \in \Dscr_k(\R^d)$ with finite mass and a vector field $v\colon \R^d \to \R^d$ defined $\|T\|$-a.e., we define the \textbf{wedge product}
	\[
	v\wedge T:=(v\wedge \tau) \mu\in \Dscr_{k+1}(\R^d).
	\]
	
	The \textbf{pushforward} of $\T$ with respect to a proper $\Crm^1$-map $f\colon \R^d \to \R^d$ is defined by
	\[
	\dprb{ f_* T,\omega}=\dprb{ \T,f^* \omega}. 
	\]
	In view of~\eqref{eq:pullback_mass_comass} we have the estimate 
	\begin{equation}\label{eq:mass_of_pushforward}
		\Mbf(f_{*} T) \leq \int_{\R^d} \bigl\| Df(x)[\tau(x)] \bigr\| \; \dd \|T\|(x)
		\leq \|Df\|_\infty^k \Mbf(T).
	\end{equation}
	In the case of measures, we employ instead the standard notation $f_{\#} \mu$ for the pushforward of $\mu$ under a map $f$, namely, the measure defined by $f_{\#}\mu (A)=\mu(f^{-1}(A))$.
	
	If $T$ is \textbf{simple}, i.e., $\vec{T}$ is a simple $k$-vector $\|T\|$-almost everywhere, then the same inequality holds with the mass norm $\|\cdot\|$ replaced by the Euclidean norm $|\cdot|$.
	
	Given two currents $T_1\in \mathscr{D}_{k_1}(\R^{d_1})$ and $T_2\in\mathscr{D}_{k_2}(\R^{d_2})$, their \textbf{product} $T_1\times T_2$ is a well-defined current in $\mathscr{D}_{k_1+k_2}(\R^{d_1}\times\R^{d_2})$. The boundary of the product is given by
	\begin{equation}\label{eq:boundary_of_product}
		\partial (T_1\times T_2)=\partial T_1 \times T_2 + (-1)^{k_1} T_1\times \partial T_2.
	\end{equation}
	A $k$-current on $\R^d$ is said to be \textbf{normal} if both $T$ and $\partial T$ have finite mass. The space of normal $k$-currents is denoted by $\mathrm N_k(\R^d)$.	
	
	\subsection{Decomposability bundle}
	We recall from~\cite{AM} the definition and a few basic facts about the decomposability bundle. Given a measure $\mu$ on $\R^n$, the \textit{decomposability bundle} is a $\mu$-measurable map $x\mapsto V(\mu,x)$ (defined up to $\mu$-negligible sets) which associates to $\mu$-a.e.\ $x$ a subspace $V(\mu,x)$ of $\R^n$. We refer to~\cite{AM} for the precise notion of $\mu$-measurability of subspace-valued maps.
	
	The map  $V$ satisfies the following property: Every Lipschitz function $f \colon \R^n\to\R$ is differentiable at $x$ along the subspace $V(\mu,x)$, for $\mu$-a.e.\ $x\in\R^n$. Moreover, this map is $\mu$-maximal in a suitable sense, meaning that $V(\mu,x)$ is, for $\mu$-a.e.\ $x$, the biggest subspace with this property (see~\cite[Theorem~1.1]{AM}). 
	The directional derivative of $f$ at $x$ in direction $v \in V(\mu,x)$ will be denoted by $Df(x)[v]$. Observe that this is a slight abuse of notation, as the full differential $Df$ might not exist at $x$, even though the directional derivative exists.
	
	A key fact about the decomposability bundle with regard to the theory of normal currents is the following~\cite[Theorem~5.10]{AM}: Given a normal $k$-current $T=\vec{T}\|T\|$ in $\R^n$, it holds that
	\begin{equation}\label{eq:span_in_bundle}
		\spn(\vec{T})\subseteq V(\|T\|,x)\qquad\text{for $\|T\|$-a.e.\ $x\in\R^n$}.
	\end{equation}
	In particular, given any Lipschitz function $f$, we can define $D_T f$ at $\|T\|$-a.e.\ point as the restriction of the differential of $f$ to $\spn(\vec T)$. We will usually just write $Df$ instead of $D_T f$ when this differential is evaluated in a direction in $\spn(\vec T)$.
	
	Recall that for a normal current $T\in \Nrm_k(\R^d)$ it is possible to define the pushforward $f_*T$ when $f\colon \R^d \to \R^d$ is merely Lipschitz via the homotopy formula~\cite[4.1.14]{Federer69book}. Classically, no explicit formula for this pushforward was available. However, it is shown in~\cite[Proposition~5.17]{AM} that the pushforward formula, in fact, remains true:
	
	\begin{lemma}\label{lemma:pushforward_currents}
		Suppose that $T=\tau\mu$ is a normal $k$-current in $\R^n$, and $f \colon \R^n\to \R^m$ is a proper, injective Lipschitz map. Then, the pushforward current $f_*T$ satisfies
		\[
		f_*T= \tilde\tau\tilde\mu,
		\]
		where $\tilde\mu =f_\#\mu $, and $\tilde \tau(y)=Df(x) [\tau(x)] = D_T f(x) [\tau(x)]$ with $y=f(x)$.
	\end{lemma}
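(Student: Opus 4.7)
The plan is to exploit the differentiability of $f$ along the decomposability bundle granted by~\eqref{eq:span_in_bundle}, combined with a Whitney–Lusin $C^1$ approximation of Lipschitz maps, to reduce to the classical pushforward formula for smooth proper maps (where the identity $f_*(\tau\mu) = (Df[\tau]) \cdot f_\#\mu$ is standard, see, e.g., Federer 4.1.30).

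First, I would invoke a Lusin-type approximation adapted to the bundle: for every $\eta > 0$ there is a compact set $K_\eta \subseteq \R^n$ with $\|T\|(\R^n \setminus K_\eta) < \eta$ together with a proper $C^1$ map $g_\eta\colon \R^n \to \R^m$ with $\|Dg_\eta\|_\infty \le C\,\Lip(f)$, satisfying $g_\eta = f$ on $K_\eta$ and, crucially, $Dg_\eta(x)[v] = Df(x)[v]$ for all $x \in K_\eta$ and $v \in V(\mu,x)$. Such an approximation is available because, by~\eqref{eq:span_in_bundle}, $\spn(\vec T(x)) \subseteq V(\mu,x)$ and $f$ is differentiable at $\mu$-a.e.\ $x$ along $V(\mu,x)$; the required map is then built via a Whitney extension starting from the jet $(f,Df|_{V(\mu,\cdot)})$ prescribed on a Lusin-regular $K_\eta$.

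Next, I would test $f_*T$ against an arbitrary $\omega \in \Dscr^k(\R^m)$. The classical Lipschitz pushforward (defined via mollification or the homotopy formula) is continuous under the combination of uniform and Lipschitz convergence, so $(g_\eta)_* T$ and $f_* T$ differ by a current whose action on $\omega$ is controlled in absolute value by $C\,\Lip(f)^k\,\|\omega\|_\infty\, \|T\|(\R^n \setminus K_\eta) = O(\eta)$. For the smooth map $g_\eta$, the classical pushforward formula gives
\[
\dprb{(g_\eta)_* T, \omega} = \int_{\R^n} \dprb{Dg_\eta(x)[\tau(x)], \omega(g_\eta(x))} \,\dd\mu(x).
\]
Splitting this integral over $K_\eta$ and its complement, using $g_\eta = f$ and $Dg_\eta[\tau] = Df[\tau]$ on $K_\eta$, and estimating the complementary part again by $C\,\Lip(f)^k\,\|\omega\|_\infty\,\|T\|(\R^n \setminus K_\eta)$, the limit $\eta \to 0$ produces
\[
\dprb{f_* T, \omega} = \int_{\R^n} \dprb{Df(x)[\tau(x)], \omega(f(x))} \,\dd\mu(x).
\]
Injectivity and properness of $f$ then let us rewrite the right-hand side as $\int \dprb{\tilde\tau(y),\omega(y)}\,\dd\tilde\mu(y)$ with the stated $\tilde\tau,\tilde\mu$, which is the claim.

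The main obstacle is the bundle-aware Whitney step: one must extend the partially defined jet $(f, Df|_{V(\mu,\cdot)})$ — only directional derivatives along $V(\mu,x)$ are known to exist, while the full differential may fail everywhere on $K_\eta$ — to a genuine $C^1$ map that realises them on a set of nearly full $\|T\|$-mass. This is precisely the point where the Alberti–Marchese machinery is indispensable: the $\mu$-measurability and maximality of $V(\mu,\cdot)$, together with the Lipschitz regularity of $f$ along this distribution, provide exactly the compatibility needed to run Whitney's extension theorem with a prescribed first-order jet on a Lusin-regular subset.
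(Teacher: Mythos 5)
The paper offers no proof of this lemma: it is quoted verbatim from Alberti--Marchese, \cite[Proposition~5.17]{AM}. So the only question is whether your argument would actually establish it. Your overall strategy --- approximate $f$ by $\Crm^1$ maps, apply the classical pushforward formula, and pass to the limit --- is the right one and is essentially how the result is proved in~\cite{AM}. The problem is the step you yourself flag as the main obstacle, the ``bundle-aware Whitney--Lusin approximation'', which as stated is a genuine gap. You ask for a compact set $K_\eta$ of nearly full $\|T\|$-measure and a $\Crm^1$ map $g_\eta$ with $g_\eta=f$ on $K_\eta$ and $Dg_\eta(x)[v]=Df(x)[v]$ for all $v\in V(\mu,x)$, $x \in K_\eta$, to be produced by Whitney extension of the partial jet $(f,Df|_{V(\mu,\cdot)})$. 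But Whitney's theorem needs a \emph{full} first-order jet $(f,L_x)$ satisfying the compatibility condition $f(y)-f(x)-L_x(y-x)=o(|y-x|)$ uniformly for $x,y\in K_\eta$. Since $f$ is only differentiable along $V(\mu,x)$, this condition is not available for pairs $x,y\in K_\eta$ with $y-x$ transversal to $V(\mu,x)$ --- and when $\mu$ is singular with a proper bundle, its support can perfectly well accumulate in such transversal directions --- no matter how you choose to extend $L_x$ off the bundle. The maximality and measurability of $V(\mu,\cdot)$ do not supply this compatibility, so the extension theorem cannot be ``run'' as claimed; this Lusin-type statement with \emph{exact} agreement of the directional derivatives on a large set is not among the Alberti--Marchese results and cannot be taken for granted.

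The repair is to use what the machinery actually provides, namely the (vector-valued analogue of) \cite[Corollary~8.3]{AM}, which the paper itself invokes in Lemma~\ref{lemma:AM}: there exist $\Crm^1$ maps $f_j\to f$ locally uniformly with $\sup_j\Lip(f_j)<\infty$ and $Df_j(x)[v]\to Df(x)[v]$ for $\mu$-a.e.\ $x$ and all $v\in V(\mu,x)$. Then $(f_j)_*T\to f_*T$ by the homotopy formula (uniform convergence with equibounded Lipschitz constants, using that $T$ is normal), while the classical formula $\dprb{(f_j)_*T,\omega}=\int\dprb{Df_j(x)[\tau(x)],\omega(f_j(x))}\,\dd\mu(x)$ converges to the claimed expression by dominated convergence, since $\spn(\tau(x))\subseteq V(\mu,x)$ by~\eqref{eq:span_in_bundle}. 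This yields the same conclusion without any exact-agreement set. A smaller inaccuracy in your write-up: the estimate for $\dprb{(g_\eta)_*T-f_*T,\omega}$ via the homotopy formula also produces a boundary term, so the bound must involve $\|\partial T\|(\R^n\setminus K_\eta)$ and $\|d\omega\|_\infty$ (or a factor $\|f-g_\eta\|_\infty$), not only $\|T\|(\R^n\setminus K_\eta)$ and $\|\omega\|_\infty$.
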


	\subsection{Flows of Lipschitz fields}\label{ss:flows}
	
	Suppose that $b\colon \R^d \to \R^d$ is a Lipschitz, globally bounded vector field and and let $\Phi \colon \R \times \R^d \to \R^d$ be the associated flow, i.e. the unique map satisfying 
	\begin{equation}
		\left\{\begin{aligned}
			\frac{\partial}{\partial t} \Phi(t,x) &= b(\Phi(t,x)), &&(t,x) \in \R \times \R^d, \\ 
			\Phi(0,x)&=x, && x \in \R^d. 
		\end{aligned}\right.
	\end{equation}
	The existence and uniqueness of the flow map (which is indeed defined on the whole $\R$) follow from the classical Cauchy--Lipschitz theory. This also yields that, for each fixed $t \in \R$, the map  
	\[
	\Phi_t \colon x \mapsto \Phi_t(x):=\Phi(t,x)
	\]
	is Lipschitz. As a consequence of the uniqueness of the flow, we immediately deduce the \emph{semigroup law} formula
	\[
	\Phi_t \circ \Phi_s = \Phi_{t+s}, \qquad  t,s\in \R. 
	\]
	Moreover, for every $t \in \R$ the map $\Phi_t$ is invertible and it holds 
	\[
	(\Phi_t)^{-1} = \Phi_{-t}
	\]
	In the following, we will often need to consider the map $\Psi \colon \R \times \R^d \to \R \times \R^d$ given by 
	\begin{equation}\label{eq:def_Psi}
		\Psi(t,x):=(t,\Phi_t(x)), \qquad (t,x)\in\R\times  \R^d. 
	\end{equation}
	Observe that
	\[
	\Psi^{-1}(s,y)=(s,\Phi_s^{-1}(y))=(s, \Phi_{-s}(y)), \qquad (s, y) \in \R \times \R^d. 
	\]

	We collect in the next lemma some formulae involving the directional derivatives of the maps $\Phi_t$ and $\Psi$. 
	
	\begin{lemma}\label{lemma:directional_derivative_flow} The following assertions hold true: 
		\begin{enumerate} 
			\item[(i)] $D\Phi_t(x)[b(x)]=b(\Phi_t(x))$ for every $(t,x)\in\R\times\R^d$; 
			\item[(ii)] $D\Psi^{-1}(s,y)[(1,b(y))] = (1,0)$  for every $(s,y)\in\R\times\R^d$;
			\item[(iii)] $D\Psi (t,x)[(1,0)]=(1,b(\Phi_t(x)))$ for every $(t,x)\in\R\times\R^d$.
		\end{enumerate} 
	\end{lemma}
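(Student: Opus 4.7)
The plan is to treat the three parts in the order (iii), (i), (ii), since (iii) is essentially the defining ODE, (i) needs the semigroup property of the flow plus the global Lipschitz bound on $\Phi_t$, and (ii) combines both ingredients.

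For (iii), I would simply observe that $\Psi(t+h,x)-\Psi(t,x)=(h,\Phi_{t+h}(x)-\Phi_t(x))$ and that the difference quotient in the second component tends to $\partial_t\Phi(t,x)=b(\Phi_t(x))$ by the very definition of the flow. This is the one-sided ODE identity, so no Lipschitz argument is needed.

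For (i), the idea is to replace the straight line increment $x+sb(x)$ by the curved increment $\Phi_s(x)$ and then use the semigroup law $\Phi_t\circ\Phi_s=\Phi_{t+s}$. Writing
\[
\Phi_t(x+sb(x))-\Phi_t(x)=\bigl[\Phi_t(\Phi_s(x))-\Phi_t(x)\bigr]+\bigl[\Phi_t(x+sb(x))-\Phi_t(\Phi_s(x))\bigr],
\]
the first bracket equals $\Phi_{t+s}(x)-\Phi_t(x)=s\,b(\Phi_t(x))+o(s)$ by (iii) (applied at the base point $x$). For the second bracket, the Lipschitz continuity of $\Phi_t$ gives the bound $\Lip(\Phi_t)\cdot|\Phi_s(x)-x-sb(x)|$, and the flow identity $\Phi_s(x)=x+\int_0^s b(\Phi_r(x))\,\dd r$ together with the continuity of $b\circ\Phi$ shows that $\Phi_s(x)-x-sb(x)=o(s)$. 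Dividing by $s$ and letting $s\to0$ yields the directional derivative $D\Phi_t(x)[b(x)]=b(\Phi_t(x))$.

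For (ii), I would compute the difference quotient of $\Psi^{-1}(s+h,y+hb(y))=(s+h,\Phi_{-s-h}(y+hb(y)))$ directly. The first component is trivially $1$ in the limit. For the second, using $\Phi_{-s-h}=\Phi_{-s}\circ\Phi_{-h}$, I would show that $\Phi_{-h}(y+hb(y))=y+o(h)$: indeed, by the ODE,
\[
\Phi_{-h}(y+hb(y))=(y+hb(y))-\int_0^h b(\Phi_{-r}(y+hb(y)))\,\dd r=y+h\bigl(b(y)-b(y)\bigr)+o(h),
\]
where continuity of $b$ and of the flow absorb the remaining terms into $o(h)$. Applying the Lipschitz map $\Phi_{-s}$ then gives $\Phi_{-s-h}(y+hb(y))=\Phi_{-s}(y)+o(h)$, hence the second component of the directional derivative vanishes, as claimed.

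The main obstacle is the one already handled in step (i): since $\Phi_t$ is only Lipschitz, standard chain-rule identities cannot be invoked directly, and each time a nonlinear change of variables is performed inside $\Phi_t$ or $\Phi_{-s}$ one must verify that the resulting error is $o(h)$ and then absorb it using the global Lipschitz bound of the outer flow map. This is the reason why one works consistently with directional, rather than full, derivatives.
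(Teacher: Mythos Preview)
Your proposal is correct and follows essentially the same approach as the paper. The paper proves only (i) explicitly (declaring (ii) and (iii) similar) and uses the very same decomposition you give: it writes $\Phi_t(x+hb(x))-\Phi_t(x)$ as the sum of $\Phi_{t+h}(x)-\Phi_t(x)$ (your first bracket, via $\Phi_{t+h}=\Phi_t\circ\Phi_h$) and $\Phi_t(x+hb(x))-\Phi_t(\Phi_h(x))$ (your second bracket), and controls the latter by $\Lip(\Phi_t)\,|x+hb(x)-\Phi_h(x)|$; the only cosmetic difference is that the paper records the quantitative bound $|\Phi_h(x)-x-hb(x)|\le \Lip(b)\|b\|_\infty h^2$ rather than your $o(s)$, but either suffices.
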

	
	Here, in vectors like $(1,0)$ the ``$0$'' is understood as the zero vector in $\R^d$ and thus $(1,0) \in \R \times \R^d$.
	
	\begin{proof}
		We only show~(i) as the proofs of~(ii) and~(iii) are very similar and therefore omitted. By definition, for every fixed $(t,x) \in \R \times \R^d$, we have
		\begin{align*}
			D\Phi_t(x)[b(x)] & = \lim_{h \to 0} \frac{\Phi_t(x+hb(x))-\Phi_t(x))}{h} \\
			& = \lim_{h \to 0} \left[ \frac{\Phi_t(x+hb(x))-\Phi_{t+h}(x)}{h}+ \frac{\Phi_{t+h}(x)-\Phi_t(x)}{h}\right] \\ 
			& = \ell + \frac{\dd}{\dd t} \Phi_t(x)\\ 
			& = \ell + b(\Phi_t(x)), 
		\end{align*}
		so the claim will be proven if we show $\ell = 0$. 
		We observe that
		\begin{equation*}
			\begin{split}
				|\Phi_h(x)-x- hb(x)| & = \left| \int_{0}^h b(\Phi_\tau(x)) - b(x) \;\dd \tau  \right|\\ 
				& \le \int_{0}^h |b(\Phi_\tau(x)) - b(\Phi_0(x))| \;\dd\tau  \\
				& \le \Lip(b) h \sup_{0\leq \tau \leq h}|\Phi_\tau(x)-\Phi_0(x)| \\ 
				& \le \Lip(b) h \sup_{0\leq \tau \leq h}\left|\int_{0}^\tau b(\Phi_\sigma(x)) \;\dd \sigma \right|,
			\end{split}
		\end{equation*}
		whence we obtain the following quadratic error estimate
		\begin{equation}\label{eq:conto_fondamentale}
			|\Phi_h(x)-x- hb(x)| \le\Lip(b) \| b\|_{\infty}  h^2.
		\end{equation}
		Combining this with the semigroup law $\Phi_{t+h}(x) = \Phi_t(\Phi_h(x))$, we can conclude that the following estimate holds: 
		\begin{align*}
			|\Phi_t(x+hb(x))-\Phi_{t+h}(x)| & = |\Phi_t(x+hb(x))-\Phi_{t}(\Phi_h(x))| \\ 
			& \le \Lip(\Phi_t) |x+hb(x)-\Phi_{h}(x)| \\ 
			& \le \Lip(\Phi_t)\Lip(b) \|b\|_\infty h^2. 
		\end{align*}
		As a consequence, 
		\[
		|\ell| = \lim_{h \to 0} \frac{|\Phi_t(x+hb(x))-\Phi_{t+h}(x)|}{|h|} = 0 
		\]
		and the claim is proven.
	\end{proof}
	
	\section{Existence of solutions} \label{sc:exist}
	
	In this section we will prove the existence part of Theorem~\ref{thm:main}.
	
	\begin{proposition}[Existence]\label{prop:existence_currents}
		Let $b\colon \R^d \to \R^d$ be a globally bounded and Lipschitz vector field, and let $\Phi$ be the corresponding flow. Moreover, let $\overline T\in \Nrm_k(\R^d)$. Then, the currents
		\[
		T_t:=(\Phi_t)_* \overline T,  \qquad t \in (0,1),
		\]
		satisfy
		\[
		\left\{\begin{aligned}
			\frac{\dd}{\dd t}T_t+\Lcal_b T_t &=0,  \qquad t \in (0,1), \\
			T_0 &=\overline T.
		\end{aligned}\right.
		\]
	\end{proposition}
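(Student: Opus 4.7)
The plan has three parts, corresponding to the three requirements of the weak-solution definition: (a) the pointwise description of $T_t=(\Phi_t)_*\overline T$, (b) the integrability bound~\eqref{eq:assumption_integrability_masses} together with the initial condition, and (c) the weak PDE~\eqref{eq:PDE_weak_formulation}.

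Steps (a) and (b) will be short. Since each $\Phi_t$ is injective and Lipschitz and, by~\eqref{eq:span_in_bundle}, $\spn(\vec{\overline T}(x))\subseteq V(\|\overline T\|,x)$ at $\|\overline T\|$-a.e.\ $x$, Lemma~\ref{lemma:pushforward_currents} gives the explicit formula
\[
T_t=\tilde\tau_t\,(\Phi_t)_\#\|\overline T\|,\qquad \tilde\tau_t(\Phi_t(x))=D\Phi_t(x)[\vec{\overline T}(x)].
\]
The Grönwall bound $\Lip(\Phi_t)\le\exp(t\Lip(b))$, combined with~\eqref{eq:mass_of_pushforward} and the commutation $\partial T_t=(\Phi_t)_*\partial\overline T$, then gives uniform control of $\Mbf(T_t)+\Mbf(\partial T_t)$ on $[0,1]$, yielding~\eqref{eq:assumption_integrability_masses}. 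The weak$^*$-convergence $T_t\weaksto\overline T$ as $t\downarrow 0$ follows from the uniform convergence $\Phi_t\to\mathrm{id}$ together with dominated convergence in the above representation.

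The main step is (c). I would encode the whole family in a single normal space-time $(k+1)$-current and extract the PDE from its boundary. Set
\[
S:=\Psi_*\bigl(\bbb{(0,1)}\times\overline T\bigr)\in\Nrm_{k+1}(\R\times\R^d),
\]
with $\Psi$ from~\eqref{eq:def_Psi}. Applying Lemma~\ref{lemma:pushforward_currents} to the bi-Lipschitz injective map $\Psi$ and using Lemma~\ref{lemma:directional_derivative_flow}(iii) in the time direction, the orientation of $S$ at $(t,y)$ should be
\[
(1,b(y))\wedge (0,\tilde\tau_t(y)).
\]
Given $\psi\in \Crm^1_c((0,1))$ and $\omega\in\Dscr^k(\R^d)$, the Leibniz rule yields the identity
\[
\tbf^*d\psi\wedge\pbf^*\omega=d\bigl(\tbf^*\psi\cdot\pbf^*\omega\bigr)-\tbf^*\psi\cdot\pbf^*d\omega,
\]
so that pairing with $S$ gives
\[
\dprb{S,\tbf^*d\psi\wedge\pbf^*\omega}=\dprb{\partial S,\tbf^*\psi\cdot\pbf^*\omega}-\dprb{S,\tbf^*\psi\cdot\pbf^*d\omega}.
\]
The plan is then to identify each of the three pairings with one of the integrals in~\eqref{eq:PDE_weak_formulation} via the same device: decompose $(1,b)\wedge(0,\tilde\tau)=(1,0)\wedge(0,\tilde\tau)+(0,b\wedge\tilde\tau)$, and observe that, since $\pbf^*\omega$ and $\pbf^*d\omega$ are purely spatial forms (no $dt$ component), Lemma~\ref{lemma:wedge_perp}(ii) annihilates the $(1,0)$-piece. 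This should give $\dprb{S,\tbf^*d\psi\wedge\pbf^*\omega}=\int_0^1\psi'(t)\dprb{T_t,\omega}\,dt$; moreover, using~\eqref{eq:boundary_of_product} and $\psi(0)=\psi(1)=0$ to kill the endpoint slices of $\partial S$, the boundary term should become $-\int_0^1\psi(t)\dprb{b\wedge\partial T_t,\omega}\,dt$; and the last term should become $\int_0^1\psi(t)\dprb{b\wedge T_t,d\omega}\,dt=\int_0^1\psi(t)\dprb{\partial(b\wedge T_t),\omega}\,dt$. Summing these three identities gives~\eqref{eq:PDE_weak_formulation}.

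The principal obstacle is that, because $\Phi_t$ is merely Lipschitz, direct pointwise differentiation of $t\mapsto\dprb{T_t,\omega}$ is not available on $\supp(\overline T)$, since $\Phi_t$ may fail to be differentiable there. The space-time device bypasses this by turning the $t$-derivative into a boundary computation on $S$, and the decomposability bundle (Lemma~\ref{lemma:pushforward_currents}) supplies the pointwise orientation $(1,b)\wedge(0,\tilde\tau_t)$ that makes the multilinear-algebra simplifications above rigorous.
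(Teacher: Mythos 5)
Your proposal is correct and follows essentially the same route as the paper: both push forward the cylinder $\bbb{0,1}\times\overline T$ by $\Psi$, obtain the pointwise orientation $(1,b(y))\wedge(\Phi_t)_*\tau$ from Lemma~\ref{lemma:pushforward_currents} together with Lemma~\ref{lemma:directional_derivative_flow}, and recover~\eqref{eq:PDE_weak_formulation} by pairing $\dprb{S,d\eta}=\dprb{\partial S,\eta}$ with tensor forms $\tbf^*\alpha\wedge\pbf^*\beta$ and splitting via Lemma~\ref{lemma:wedge_perp}. One small narration slip: for the pairing $\dprb{S,\tbf^*d\psi\wedge\pbf^*\omega}$ the form is \emph{not} purely spatial, so there it is the spatial piece $(0,b)\wedge(0,\tilde\tau_t)$ that drops out while the $(1,0)$-piece survives (via Lemma~\ref{lemma:wedge_perp}(i), exactly as in the paper's Step~2) to produce $\int_0^1\psi'(t)\,\dprb{T_t,\omega}\;\dd t$ --- your stated outcome for that term is nonetheless the correct one.
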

	
	\begin{remark}
		Note that the explicit formula for the solution in particular implies that if $\overline T$ is integral, then so is $T_t$ for all $t \in (0,1)$.  
	\end{remark}
	
	\begin{proof}
		\medskip\noindent\textit{Step 1.} 
		We write 
		\[
		\overline{T}  =\tau \mu
		\]
		and 
		\[
		\partial \overline{T}  = \sigma \nu, 
		\]
		where $\mu,\nu$ are non-negative measures and $\tau$ is a unit $k$-vector field (defined $\mu$-a.e.) and $\sigma$ is a unit $(k-1)$-vector field (defined $\nu$-a.e.). 
		We also define the following \emph{cylindrical current}  
		\[
		C := \bbb{0,1} \times \overline{T} \in \Nrm_{k+1}(\R\times \R^d).
		\]
		Via the formula for the boundary of the product we observe that 
		\[
		\partial C \restrict ((0,1)\times \R^d) = -\bbb{0,1} \times \partial \overline{T}.
		\] 
		The current $C$ can be written in the form 
		\[
		C=(\ee_0 \wedge \tau) \L^1 \otimes \mu,
		\]
		where we have identified $\tau$ with $(\iota_t)_* \tau$ (recall that $\iota_t(x):=(t,x)$).
		Similarly, the current $\partial C$ can be written in the form 
		\[
		\partial C=-(\ee_0 \wedge \sigma )\L^1 \otimes \nu.
		\]
		By Lemma~\ref{lemma:pushforward_currents} we have
		\[
		(\Phi_t)_*\overline{T}=  \tau_t \mu_t,
		\]
		where 
		\[
		\tau_t(y):=D\Phi_t(\Phi_{-t}(y))[\tau(\Phi_{-t}(y))]
		\qquad\text{and}\qquad
		\mu_t := (\Phi_t)_\#\mu.
		\]
		We also let $Z:=\Psi_{*}C$, where $\Psi$ is the map defined in~\eqref{eq:def_Psi} (this current is the candidate to be the space-time solution). 
		By Lemma~\ref{lemma:pushforward_currents} we obtain that the current $Z$ can be written as $Z=z\lambda$, where 
		\[
		\lambda := \Psi_{\#} (\L^1\otimes \mu) =  (\Id,\Phi_t)_{\#} (\L^1 \otimes \mu) =  \L^1 (\dd t) \otimes (\Phi_t)_{\#} \mu =\L^1 (\dd t) \otimes \mu_t
		\]
		and
		\begin{align*}
			z(s,y) &= D\Psi(\Psi^{-1}(s,y)) \bigl[(\ee_0 \wedge (\iota_s)_*\tau)(\Psi^{-1}(s,y)) \bigr] \\
			&= D\Psi(\Psi^{-1}(s,y))[\ee_0] \wedge D\Psi(\Psi^{-1}(s,y))\big[ (\iota_s)_*[\tau(\Phi_s^{-1}(y))]\big]\\
			& = (1,b(y))\wedge D\Psi(\Psi^{-1}(s,y))\big[ (\iota_s)_*[\tau(\Phi_s^{-1}(y))]\big]\\
			& =(1,b(y))\wedge (\iota_s)_*[(\Phi_s)_*\tau](s,y),
		\end{align*}
		where we have used Lemma~\ref{lemma:directional_derivative_flow}~(iii) to infer that 
		\[
		D\Psi(\Psi^{-1}(s,y))[\ee_0] = (1,b(y))
		\]
		and the identity $\Psi\circ\iota_s=\iota_s\circ\Phi_s$.

		\medskip\noindent\textit{Step 2.} 
		We are now going to test $\partial Z$ against a generic form
		\[
		\eta=\tbf^*\alpha\wedge \pbf^*\beta
		\]
		with $\alpha\in\Dscr^0((0,1))$ and $\beta\in\Dscr^k(\R^d)$, writing it in two ways, namely as $\dpr{ Z,d\eta}$ and as $\dpr{ \partial Z,\eta}$, and then equating the two expressions.
		
		First, by the Leibniz rule we have that $d\eta=\tbf^*d\alpha\wedge\pbf^*\beta+\tbf^*\alpha\wedge\pbf^*d\beta$. When we test against the first term, recalling also Lemma~\ref{lemma:wedge_perp}, we obtain
		\begin{align*}
			\dprb{ Z,\tbf^*d\alpha\wedge\pbf^*\beta } &= \int_0^1\int_{\R^d} \dprb{(1,b)\wedge (\iota_s)_*[(\Phi_s)_*\tau],\tbf^*d\alpha\wedge\pbf^*\beta} \;\dd\mu_s\;\dd s\\
			&= \int_0^1\int_{\R^d} \dprb{ (1,b),\tbf^*d\alpha} \dprb{ (\Phi_s)_*\tau,\beta} \;\dd\mu_s \;\dd s\\
			&= \int_0^1 \alpha'(s) \, \dprb{ T_s,\beta}\;\dd s.
		\end{align*}
		When we test against the second term we obtain
		\begin{align*}
			\dprb{ Z,\tbf^*\alpha\wedge\pbf^*d\beta } &= \int_0^1\int_{\R^d} \alpha(s) \, \dprb{(1,b)\wedge (\iota_s)_*[(\Phi_s)_*\tau],\pbf^*d\beta} \;\dd\mu_s\;\dd s\\
			&= \int_0^1\int_{\R^d}\alpha(s) \, \dprb{ b\wedge (\Phi_s)_*\tau,d\beta} \;\dd\mu_s\;\dd s\\
			&= \int_0^1 \alpha(s) \, \dprb{ b\wedge T_s,d\beta}\;\dd s.
		\end{align*}
		Combining, we arrive at
		\[
		\dprb{Z,d\eta} = \int_0^1 \alpha'(s) \, \dprb{ T_s,\beta} + \alpha(s) \, \dprb{ b\wedge T_s,d\beta} \;\dd s.
		\]
		
		Second, using that
		\[
		\partial Z \restrict ((0,1)\times \R^d)
		=\Psi_*(\partial C \restrict ((0,1)\times \R^d))=\Psi_*(-\curr{0,1}\times \partial \overline{T}),
		\]
		we can rely on Lemma~\ref{lemma:pushforward_currents} to write
		\[
		\partial Z \restrict ((0,1)\times \R^d)
		= \xi \,\Psi_\# \|\partial C \restrict ((0,1)\times \R^d)\|
		=\xi\, (\L^1 (\dd s) \restrict (0,1)) \otimes \nu_s,
		\]
		where $\nu_s =(\Phi_s)_\#\|\partial\overline{T}\|$ and
		\[
		\xi(s,y)=\Psi_* (-\ee_0\wedge \sigma)(s,y)=-(1,b(y))\wedge (\Psi_*(\iota_s)_*\sigma) (s,y).
		\]
		Here we again used that $(\Psi_*\ee_0)(s,y)=(1,b(y))$ by Lemma~\ref{lemma:directional_derivative_flow}. Therefore,
		\begin{align*}
			\dprb{ \partial Z,\eta} &=-\int_0^1\int_{\R^d} \dprb{ (1,b)\wedge (\Psi_*(\iota_s)_*\sigma) (s,y),(\tbf^*\alpha\wedge \pbf^*\beta )(s,y)} \;\dd\nu_s(y)\;\dd s \\
			&= -\int_0^1\int_{\R^d} \alpha(s) \, \dprb{ b(y)\wedge \pbf_*\Psi_*(\iota_s)_*\sigma (y),\beta(y)} \;\dd\nu_s(y)\;\dd s \\
			&= -\int_0^1\int_{\R^d} \alpha(s) \, \dprb{ b(y)\wedge (\Phi_s)_*\sigma (y),\beta(y)} \;\dd\nu_s(y)\;\dd s \\
			&= -\int_0^1\alpha(s) \, \dprb{ b\wedge\partial T_s, \beta}\;\dd s.
		\end{align*}
		
		\medskip\noindent\textit{Step 3.} From Step 2 we obtain that
		\begin{align*}
			-\int_0^1\alpha(s) \, \dprb{ b\wedge\partial T_s, \beta}\;\dd s 
			&=\dprb{ \partial Z,\eta} \\
			&=\dprb{  Z,d\eta} \\
			&= \int_0^1 \alpha'(s) \, \dprb{ T_s,\beta} + \alpha(s) \, \dprb{ b\wedge T_s,d\beta}\;\dd s
		\end{align*}
		for every $\alpha\in\Dscr^0((0,1))=\Crm_c^\infty((0,1))$ and every $\beta\in\Dscr^k(\R^d)$.
		Rearranging terms gives exactly the weak formulation~\eqref{eq:PDE_weak_formulation} of~\eqref{eq:GTE}.
	\end{proof}

	\begin{remark} Proposition \ref{prop:existence_currents} can also be proved by means of a simple approximation argument that does not necessitate the decomposability bundle (but assumes the existence of solutions when  $b$ is smooth). Indeed, if $(b^\eps)_\eps$ denotes a family of smooth vector fields approximating $b$ in the uniform norm and with equibounded Lipschitz constants, one can consider the pushforwards $T_t^\eps := (\Phi^\eps_t)_{*}\overline T$, where $\Phi^\eps$ denotes the flow of $b^\eps$. This family of currents satisfies the equation with $b^\eps$ by \cite[Theorem~3.6]{BDR} and is equibounded in mass for $\eps>0$ by \eqref{eq:mass_of_pushforward}. Therefore any limit point as $\eps\to 0$ satisfies \eqref{eq:GTE} by linearity of the equation. However, we presented an argument based on the decomposability bundle because the same tool will be used below in the proof of uniqueness.
	\end{remark}

	\section{Uniqueness of solutions} \label{sc:unique}
	
	We now turn our attention to the uniqueness part of Theorem~\ref{thm:main}. As it will be shown in the last step of the proof of Proposition~\ref{prop:uniqueness_currents} below, it is enough to prove uniqueness under the assumption that $\partial T_t=0$ for every $t \in [0,1)$ by an elementary argument. 
	
	Let thus $(T_t)_{t \in (0,1)} \subset \Nrm_k(\R^d)$ with $\partial T_t=0$ ($t \in[0,1)$) be a weakly$^*$-continuous solution to~\eqref{eq:GTE} (see~\cite[Lemma 3.5(i)]{BDR} for why we may assume weak*-continuity). We decompose $T_t=\vec{T}_t\|T_t\|$, with $\vec{T}_t$ unit $k$-vectors.
	Let $\vec{T}:(0,1)\times\R^d\to \Wedge_k(\R\times\R^d)$ be the $k$-vector field defined $\L^1_t \otimes\|T_t\|$-almost everywhere by
	\[
	\vec{T}(t,x):=(\iota_t)_* \vec{T}_t(x),
	\]
	where we recall that $\iota_t(x):=(t,x)$.
	We define the current 
	\begin{equation}\label{eq:def_U}
		U:=[(1,b(x))\wedge \vec{T}(t,x)] \, \L^1(\dd t) \otimes \|T_t\|(\dd x).
	\end{equation} 
	The following result is an extension to the case of normal currents of~\cite[Prop. 6.6]{BDR}, which covers only the case when the $T_t$'s are integral.
	
	\begin{lemma}\label{lemma:space-time-normal}
		$U$ is a normal $(k+1)$-current in $(0,1)\times\R^d$, and $\partial U \restrict (0,1)\times\R^d = 0$.
	\end{lemma}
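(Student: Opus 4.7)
My plan is to verify the two claims of Lemma~\ref{lemma:space-time-normal} separately: finite mass of $U$ and vanishing of the boundary in the open cylinder, the second being by far the core of the statement.

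For finite mass, the definition~\eqref{eq:def_U} expresses $U$ as a measure-times-vector representation already, so by the mass formula and the fact that $\vec T_t$ is a unit simple $k$-vector in $\Wedge_k(\{t\}\times \R^d)$,
\[
\mathbf{M}(U)=\int_0^1\int_{\R^d} \bigl\|(1,b(x))\wedge (\iota_t)_*\vec T_t(x)\bigr\|\;\dd\|T_t\|(x)\;\dd t\leq \sqrt{1+\|b\|_\infty^2}\int_0^1 \Mbf(T_t)\;\dd t,
\]
which is finite by the standing integrability assumption~\eqref{eq:assumption_integrability_masses} on solutions to~\eqref{eq:GTE}.

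To prove that $\partial U$ vanishes on $(0,1)\times \R^d$ it suffices, by density, to test against forms of the product type $\eta=\tbf^*\alpha\wedge \pbf^*\beta$ with $\alpha\in \Crm_c^\infty((0,1))$ and $\beta\in \Dscr^k(\R^d)$, because linear combinations of such forms are dense in $\Dscr^k((0,1)\times\R^d)$ in the topology used to define currents. By the Leibniz rule, $d\eta=\tbf^*d\alpha\wedge\pbf^*\beta+\tbf^*\alpha\wedge\pbf^*d\beta$. To analyse $\dprb{U,d\eta}$ on each summand I would use Lemma~\ref{lemma:wedge_perp}: the vector $\vec T(t,x)=(\iota_t)_*\vec T_t(x)$ is annihilated by $\tbf^*$ (since $D\tbf$ kills the spatial factor), and wedging it with $(1,b)$ on the left plays well with the split form. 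A direct application of Lemma~\ref{lemma:wedge_perp}(i) to the first summand yields
\[
\dprb{(1,b)\wedge \vec T,\tbf^*d\alpha\wedge \pbf^*\beta}=\alpha'(t)\,\dprb{\vec T_t,\beta},
\]
since $\dprb{(1,b),\tbf^*d\alpha}=\alpha'(t)$ and $\dprb{(\iota_t)_*\vec T_t,\pbf^*\beta}=\dprb{\vec T_t,\beta}$. For the second summand the $\ee_0$-part of $(1,b)\wedge \vec T$ contributes nothing (because $\pbf^*d\beta$ vanishes on $(k+1)$-vectors with an $\ee_0$-factor), so only $(0,b)\wedge \vec T$ survives, giving
\[
\dprb{(1,b)\wedge \vec T,\tbf^*\alpha\wedge \pbf^*d\beta}=\alpha(t)\,\dprb{b\wedge \vec T_t,d\beta}.
\]

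Integrating these pointwise identities against $\L^1(\dd t)\otimes\|T_t\|(\dd x)$ gives
\[
\dprb{U,d\eta}=\int_0^1 \alpha'(t)\,\dprb{T_t,\beta}\;\dd t+\int_0^1 \alpha(t)\,\dprb{\partial(b\wedge T_t),\beta}\;\dd t,
\]
where in the last integral I used the definition of the boundary. But the weak formulation~\eqref{eq:PDE_weak_formulation} of~\eqref{eq:GTE} applied with $\psi=\alpha$ and $\omega=\beta$, together with the standing assumption $\partial T_t=0$ (so that the term $b\wedge \partial T_t$ disappears), gives exactly that this sum is zero. Therefore $\dprb{\partial U,\eta}=\dprb{U,d\eta}=0$ on all split test forms, and by density on all $\eta\in \Dscr^k((0,1)\times\R^d)$, as required. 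The only real subtlety is bookkeeping the $\ee_0$-part in the two summands, which is handled cleanly by Lemma~\ref{lemma:wedge_perp}; this is essentially the same algebraic step used in Step~2 of the proof of Proposition~\ref{prop:existence_currents}.
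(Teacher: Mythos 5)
Your mass estimate and your computation for the first family of test forms are correct and coincide with the paper's argument, but the density claim on which the whole boundary computation rests is false, and this creates a genuine gap. Linear combinations of forms $\tbf^*\alpha\wedge\pbf^*\beta$ with $\alpha\in\Dscr^0((0,1))$ and $\beta\in\Dscr^k(\R^d)$ only span (densely) the $k$-forms on $(0,1)\times\R^d$ with no $\dd t$ component; a general $\eta\in\Dscr^k((0,1)\times\R^d)$ also contains terms of the type $a(t,x)\,\dd t\wedge \dd x_J$ with $|J|=k-1$, and these are not reached by your test class. Consequently, what you have shown is only that $\partial U$ annihilates the purely spatial $k$-forms, not that $\partial U\restrict (0,1)\times\R^d=0$.

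To close the gap you must also test against $\eta=\tbf^*\alpha\wedge\pbf^*\beta$ with $\alpha\in\Dscr^1((0,1))$ and $\beta\in\Dscr^{k-1}(\R^d)$, which is precisely the ``second case'' in the paper's proof. There $d\eta=-\tbf^*\alpha\wedge\pbf^*d\beta$ (since $d\alpha=0$ on the one-dimensional time axis), Lemma~\ref{lemma:wedge_perp}(i) factors the pairing as $\dprb{(1,b),\tbf^*\alpha}\dprb{\vec T_t,\pbf^*d\beta}$, and one is left with $-\int_0^1\alpha(t)\,\dprb{\partial T_t,\beta}\;\dd t$, which vanishes exactly because of the standing assumption $\partial T_t=0$. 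Note that this is a second, independent use of $\partial T_t=0$, distinct from the one you make when discarding the term $b\wedge\partial T_t$ in the weak formulation~\eqref{eq:PDE_weak_formulation}; omitting it loses part of the content of the lemma. A minor further imprecision: for a general normal current $\vec T_t$ need not be simple, but the bound $\|(1,b)\wedge\vec T_t\|\le(1+\|b\|_\infty)\,\|\vec T_t\|$ holds regardless, so your mass estimate survives.
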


	\begin{proof}
		It is easy to see that $\Mbf(U)\leq C \|b\|_\infty \int_0^1 \Mbf(T_t) \;\dd t<\infty$. By the density of linear combinations of tensor forms (see, e.g.,~\cite[4.1.8]{Federer69book}), it is enough to compute the boundary against every $k$-form $\eta=\tbf^* \alpha \wedge \pbf^*\beta$, where either $\alpha\in \Dscr^0((0,1))$ and $\beta\in\Dscr^k(\R^d)$, or $\alpha\in \Dscr^1((0,1))$ and $\beta\in\Dscr^{k-1}(\R^d)$. By the Leibniz rule we have
		\[
		d\eta=\tbf^*d\alpha \wedge \pbf^*\beta+(-1)^{\deg(\alpha)}\tbf^*\alpha\wedge\pbf^*d\beta.
		\]
		
		\medskip\noindent\textit{First case.} Let $\alpha\in \Dscr^0((0,1))$ and $\beta\in\Dscr^k(\R^d)$. Then,
		\begin{align*}
			\dprb{ U,\tbf^*d\alpha \wedge \pbf^*\beta} &= \int_0^1 \int_{\R^d} \dprb{ (1,b(x))\wedge \vec{T}(t,x),\tbf^*d\alpha \wedge \pbf^*\beta (t,x)} \;\dd\|T_t\|(x)\;\dd t\\
			&= \int_0^1 \int_{\R^d} \alpha'(t) \, \dprb{  \vec{T}_t(x), \beta(x) } \;\dd\|T_t\|(x)\;\dd t\\
			&= \int_0^1 \alpha'(t) \, \dprb{  T_t, \beta } \;\dd t,
		\end{align*}
		where we have invoked Lemma~\ref{lemma:wedge_perp}, using that $\mathrm{span}(\vec{T}(t,x))\subset \{0\}\times\R^d$ is space-like and $\tbf^*d\alpha=\alpha'(t)dt$ is time-like.
		On the other hand,
		\begin{align*}
			\dprb{ U,\tbf^*\alpha \wedge \pbf^*d\beta} &= \int_0^1 \int_{\R^d} \dprb{ (1,b(x))\wedge \vec{T}(t,x),\tbf^*\alpha \wedge \pbf^*d\beta (t,x)} \;\dd\|T_t\|(x)\;\dd t\\
			&= \int_0^1 \int_{\R^d} \alpha(t) \, \dprb{ (1,b(x))\wedge \vec{T}(t,x),\pbf^*d\beta (t,x)} \;\dd\|T_t\|(x)\;\dd t\\
			&= \int_0^1 \int_{\R^d} \alpha(t) \, \dprb{ b(x)\wedge \vec{T}_t(x),d\beta (x)} \;\dd\|T_t\|(x)\;\dd t\\
			&=\int_0^1 \alpha(t) \, \dprb{ \partial(b\wedge T_t),\beta}\;\dd t.
		\end{align*}
		Using the weak formulation~\eqref{eq:PDE_weak_formulation} we deduce that $\dprb{ U,d\eta}=0$.
		
		\medskip\noindent\textit{Second case.} Let $\alpha\in \Dscr^1((0,1))$ and $\beta\in\Dscr^{k-1}(\R^d)$. Then, $d\eta=-\tbf^*\alpha\wedge\pbf^*d\beta$ since $d\alpha=0$. Therefore,
		\begin{align*}
			\dprb{ U,\tbf^*\alpha \wedge \pbf^*d\beta} &= -\int_0^1 \int_{\R^d} \dprb{ (1,b(x))\wedge \vec{T}(t,x),\tbf^*\alpha \wedge \pbf^*d\beta (t,x)} \;\dd\|T_t\|(x)\;\dd t\\
			&= -\int_0^1 \int_{\R^d} \dprb{ (1,b(x)),\tbf^*\alpha}\dprb{ \vec{T}(t,x),\pbf^*d\beta (t,x)} \;\dd\|T_t\|(x)\;\dd t\\
			&= -\int_0^1 \int_{\R^d} \alpha(t) \, \dprb{ \vec{T}_t(x),d\beta (t,x)} \;\dd\|T_t\|(x)\;\dd t\\
			&=-\int_0^1 \alpha(t) \, \dprb{\partial T_t,\beta}\;\dd t\\
			&=0.
		\end{align*}
		Passing from the first to the second line we have used Lemma~\ref{lemma:wedge_perp}~(i), while the last equality follows from $\partial T_t=0$.
		
		Using~\eqref{eq:assumption_integrability_masses} and the density of tensor-type forms, we deduce that $\partial U \restrict (0,1)\times\R^d =0$.
	\end{proof}
		
	Next we show a (partial) differentiability property via the Alberti--Marchese theory.
	
	\begin{lemma} Define the measure
		\[
		\mu:=\L^1(\dd t) \otimes \|T_t\|\in\Mcal(\R\times \R^d).
		\]
		Then, $\spn((1,b)\wedge \vec{T_t}(x))\subset V(\mu,(t,x))$ for $\mu$-a.e.\ $(t,x)$ in $(0,1)\times\R^d$.
	\end{lemma}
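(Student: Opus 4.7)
The plan is to leverage the fact, just established in the previous lemma, that $U$ is a normal $(k+1)$-current in $(0,1)\times\R^d$, and then to apply the key property~\eqref{eq:span_in_bundle} of the decomposability bundle to $U$ itself.

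First, I would write $U$ in the standard form $U = \vec{U}\,\|U\|$. By the definition~\eqref{eq:def_U}, the density of $U$ with respect to the reference measure $\mu = \L^1(\dd t)\otimes \|T_t\|$ is the $(k+1)$-vector $(1,b(x))\wedge \vec{T}(t,x)$. Since $\vec{T}(t,x) = (\iota_t)_*\vec{T}_t(x)$ is a simple unit $k$-vector in $\Wedge_k(\{0\}\times \R^d)$, splitting $(1,b(x)) = \ee_0 + (0,b(x))$ shows that
\[
(1,b(x))\wedge\vec{T}(t,x) = \ee_0\wedge\vec{T}(t,x) + (0,b(x))\wedge\vec{T}(t,x)
\]
is a simple $(k+1)$-vector whose mass (equal to its Euclidean norm since it is simple) lies between $1$ and $\sqrt{1+\|b\|_\infty^2}$. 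In particular, $\|U\|$ and $\mu$ are mutually absolutely continuous, and
\[
\vec{U}(t,x) = \frac{(1,b(x))\wedge\vec{T}(t,x)}{\|(1,b(x))\wedge\vec{T}(t,x)\|},
\]
so $\spn(\vec{U}(t,x)) = \spn\bigl((1,b(x))\wedge \vec{T}_t(x)\bigr)$ at $\mu$-a.e.\ $(t,x)$.

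Now I apply~\eqref{eq:span_in_bundle} to the normal current $U$ (whose normality is exactly the content of Lemma~\ref{lemma:space-time-normal}): for $\|U\|$-a.e.\ $(t,x)$,
\[
\spn(\vec{U}(t,x)) \subseteq V(\|U\|,(t,x)).
\]
Since $\|U\|$ and $\mu$ are mutually absolutely continuous, this holds $\mu$-a.e. Moreover, the decomposability bundle depends only on the $\mu$-negligible sets (because differentiability of Lipschitz functions along a subspace is a property of null sets), so $V(\|U\|,(t,x)) = V(\mu,(t,x))$ $\mu$-a.e. Combining these facts yields the desired inclusion
\[
\spn\bigl((1,b(x))\wedge \vec{T}_t(x)\bigr) \subseteq V(\mu,(t,x)) \qquad \text{for $\mu$-a.e.\ } (t,x)\in(0,1)\times\R^d.
\]

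The only mildly subtle point is verifying the equivalence of $\|U\|$ and $\mu$, which reduces to the lower bound $\|(1,b(x))\wedge \vec{T}(t,x)\|\geq 1$; this comes from the fact that the simple component $\ee_0\wedge \vec{T}(t,x)$ is orthogonal (in the Euclidean sense on $\Wedge_{k+1}(\R\times\R^d)$) to the remaining ``space-like'' piece $(0,b(x))\wedge \vec{T}(t,x)$. No further work is needed beyond the two ingredients already in place: Lemma~\ref{lemma:space-time-normal} and the containment~\eqref{eq:span_in_bundle}.
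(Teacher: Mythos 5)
Your proposal is correct and follows essentially the same route as the paper: apply~\eqref{eq:span_in_bundle} to the normal current $U$ from Lemma~\ref{lemma:space-time-normal}, then transfer the conclusion from $\|U\|$ to $\mu$ via their mutual absolute continuity, which follows from the bounds $1\leq \|(1,b(x))\wedge\vec{T}(t,x)\|\leq 1+\|b\|_\infty$. Your extra remark justifying the lower bound by the orthogonality of $\ee_0\wedge\vec{T}$ to the space-like part is a welcome (and correct) elaboration of a point the paper states without proof.
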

	
	\begin{proof}
		By Lemma~\ref{lemma:space-time-normal} we have that $U$ is a normal $(k+1)$-current in $(0,1)\times\R^d$. By~\eqref{eq:span_in_bundle} we know that for $\|U\|$-a.e.\ $(t,x)$ in $(0,1)\times\R^d$ it holds that
		\[
		\spn((1,b)\wedge \vec{T_t}(x))=\spn(\vec{U}(t,x))\subset V(\|U\|,(t,x))
		\]
		Moreover, we also have that $\mu\ll\|U\|\ll \mu$ because $1\leq |(1,b)|\leq 1+\|b\|_\infty$ and also $1\leq |(1,b)\wedge \vec{T}|\leq (1+\|b\|_\infty)$. Therefore, $V(\|U\|,(t,x))=V(\mu,(t,x))$ for $\mu$-a.e.\ $(t,x)$ in $(0,1)\times\R^d$, and the conclusion follows.
	\end{proof}
	
	We are now ready to prove the uniqueness property for~\eqref{eq:initial_value_problem}.
	
	\begin{proposition}[Uniqueness]\label{prop:uniqueness_currents} 
		Let $b\colon \R^d \to \R^d$ be a globally bounded and Lipschitz vector field, and let $\Phi$ be the corresponding flow. Moreover, let $\overline T\in \Nrm_k(\R^d)$. Then, a solution to
		\[
		\left\{\begin{aligned}
			\frac{\dd}{\dd t}T_t+\Lcal_b T_t &=0,  \qquad t \in (0,1),\\
			T_0 &=\overline T
		\end{aligned}\right.
		\]
		is unique in the class of normal currents. Moreover, the solution is given by $T_t=(\Phi_t)_*\overline T$.
	\end{proposition}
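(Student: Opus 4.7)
The plan is to use the space-time current $U$ of~\eqref{eq:def_U} and ``undo the flow'' by pushing $U$ back along the inverse space-time flow $\Psi^{-1}$, relying crucially on the decomposability bundle to make sense of this pushforward pointwise. As a preliminary step, it suffices to treat the case $\partial T_t = 0$ for all $t \in [0,1)$: since $\partial$ commutes with $\Lcal_b$ (a direct check from the Cartan formula defining $\Lcal_b$ together with $\partial^2 = 0$), the family $(\partial T_t)$ solves the GTE in dimension $k-1$ with initial datum $\partial \overline T$, and an induction on $k$ (with base case $k=0$ treated in Section~\ref{sc:continuity}) combined with the existence Proposition~\ref{prop:existence_currents} reduces to this boundaryless case.

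Under this reduction, Lemma~\ref{lemma:space-time-normal} gives that $U$ is a normal $(k+1)$-current with $\partial U = 0$ on $(0,1)\times\R^d$, and the decomposability bundle inclusion $\spn((1,b)\wedge \vec T_t(x)) \subset V(\mu,(t,x))$ from the previous lemma ensures that $\Psi^{-1}$ is differentiable along $\spn(\vec U)$ at $\|U\|$-a.e.\ point. Lemma~\ref{lemma:pushforward_currents} thus defines the pushforward $W := (\Psi^{-1})_* U$ pointwise, yielding a normal $(k+1)$-current on $(0,1)\times\R^d$ with $\partial W = 0$ (since pushforward commutes with boundary). By Lemma~\ref{lemma:directional_derivative_flow}(ii), $D\Psi^{-1}(s,y)[(1,b(y))] = \ee_0$, so the orientation of $W$ takes the form
\[
\vec W(t,x) = \ee_0 \wedge V(t,x),
\]
with $V(t,x)$ a purely spatial $k$-vector (supported in the span of $\ee_1,\ldots,\ee_d$).

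The heart of the argument is then a rigidity statement: $W$ must be constant in time. Testing $0 = \dprb{ \partial W, \tbf^*\alpha \wedge \pbf^*\beta }$ for $\alpha \in \Dscr^0((0,1))$ and $\beta \in \Dscr^k(\R^d)$, and invoking Lemma~\ref{lemma:wedge_perp} together with the $\ee_0$-factor of $\vec W$ (the computation closely mirrors the first case of Lemma~\ref{lemma:space-time-normal}), reduces to
\[
\int_0^1 \alpha'(t) \, \dprb{ W_t, \beta } \, \dd t = 0,
\]
where $W_t := (\Phi_{-t})_* T_t$ is the spatial time-slice of $W$ (obtained because slicing commutes with the layer-preserving Lipschitz pushforward $\Psi^{-1}$). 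Hence $t \mapsto W_t$ is constant, and the weak$^*$ limit $T_t \weaksto \overline T$ as $t \to 0^+$ combined with $\Phi_0 = \Id$ identifies the common value as $\overline T$, yielding $T_t = (\Phi_t)_* \overline T$. The main obstacle is the rigorous handling of this last slicing step: since $\Phi_{-t}$ is only Lipschitz, the identification $W_t = (\Phi_{-t})_* T_t$ again requires the decomposability bundle, and extraction of the initial condition demands a careful weak$^*$ limit controlled by the mass bound~\eqref{eq:mass_of_pushforward} and the uniform Lipschitz dependence of $\Phi_t$ on $t$.
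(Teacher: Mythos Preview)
Your proposal is correct and follows essentially the same route as the paper: push the space-time current $U$ forward along $\Psi^{-1}$ using the decomposability-bundle pushforward formula, compute that the orientation of $W$ is $\ee_0\wedge(\text{spatial})$, test $\partial W=0$ against product forms to conclude that the slices $(\Phi_{-t})_*T_t$ are constant in $t$, and identify the constant as $\overline T$. The only minor deviation is in your reduction to the boundaryless case: you argue by induction on $k$, whereas the paper observes directly that $\partial T_t$ is already boundaryless (since $\partial^2=0$), so the boundaryless result applies to it immediately, after which one subtracts $(\Phi_t)_*\overline T$ to obtain a boundaryless solution with zero initial datum---no induction is needed.
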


	\begin{proof}
		We will first prove the proposition under the assumption that $\partial T_t=0$ for $t \in [0,1)$. In Step 4 we will show how to prove the general case.
		
		\medskip\noindent\textit{Step 1.} Let $T_t$ be a solution to~\eqref{eq:initial_value_problem} with $\partial T_t=0$, and define $U$ as in~\eqref{eq:def_U}. We write $U=u\mu$, where $\mu=\L^1_t \otimes \|T_t\|$ and $u(t,x)=(1,b(x))\wedge \vec{T}(t,x)$.
		We now define the $(k+1)$-current 
		\begin{equation}\label{eq:def_W}
			W := (\Psi^{-1})_{*}U. 
		\end{equation}
		Notice that $\Psi^{-1}$ is Lipschitz, so $W$ is well defined (via approximation and the homotopy formula).
		By Lemma~\ref{lemma:pushforward_currents} (applied with $f=\Psi^{-1}$) we can write $W=w\nu$, with $\nu=(\Psi^{-1})_\# \mu$ and
		\begin{align*}
			w(t,x)& = D\Psi^{-1}({\Psi(t,x)}) [u(t,\Phi_t(x))]\\
			& = D\Psi^{-1}({\Psi(t,x)}) [(1,b(\Phi_t(x)))\wedge\vec{T}(t,\Phi_t(x))].
		\end{align*}
		By definition of $D\Psi^{-1}$ and Lemma~\ref{lemma:directional_derivative_flow} (applied with $s=t$ and $y=\Phi_t(x)$), we obtain
		\begin{align}
			w(t,x) 
			& = D\Psi^{-1}({\Psi(t,x)})[(1,b(\Phi_t(x)))] \wedge D\Psi^{-1}({\Psi(t,x)})[\vec{T}(t,\Phi_t(x))]\nonumber\\
			&= (1,0)\wedge D\Psi^{-1}({\Psi(t,x)})[\vec{T}(t,\Phi_t(x))] \nonumber\\
			&=:(1,0)\wedge \tau(t,x).\label{eq:vertical_W}
		\end{align}

		\medskip\noindent\textit{Step 2.}
		Recalling Lemma~\ref{lemma:space-time-normal}, we  have $\partial W=0$ in $(0,1)\times \R^d$. Therefore, for every $\alpha\in\Dscr^0(\R)$ and $\beta\in\Dscr^k(\R^d)$ we have
		\[
		0 = \dprb{ \partial W, \tbf^*\alpha\wedge \pbf^*\beta } = \dprb{ W,\tbf^*d\alpha\wedge\pbf^*\beta+\tbf^*\alpha\wedge\pbf^*d\beta}.
		\]
		As for the second term, we claim that $\dprb{ W, \tbf^* \alpha \wedge\pbf^*d\beta } = 0$.
		Indeed, $\pbf^*d\beta$ only contains terms not involving $dt$. In other words, $\mathrm{span}\,(\pbf^*d\beta)\perp (1,0)$. Therefore, we can apply Lemma~\ref{lemma:wedge_perp}~(ii), taking into account~\eqref{eq:vertical_W}, to obtain the claim.
		
		As for the first term, we will show that
		\[
		\dprb{ W, \tbf^* d\alpha \wedge\pbf^* \beta }=\int_0^1 \alpha'(t) \, \dprb{ (\Phi_{-t})_* T_t,\beta} \;\dd t.
		\]
		This can be proved in the following way: We first apply Lemma~\ref{lemma:wedge_perp}~(i) to deduce that
		\begin{align*}
			\dprb{ w(t,x), \tbf^* d\alpha(t,x) \wedge\pbf^* \beta(t,x)}& =\dprb{ (1,0),\tbf^*d\alpha(t,x)}\dprb{ \tau(t,x),\pbf^* \beta(t,x)}\\
			& =\alpha'(t) \, \dprb{ \tau(t,x),\pbf^* \beta(t,x)}\\
			&= \alpha'(t) \, \dprb{ D\Psi^{-1}({\Psi(t,x)})[\vec{T}(t,\Phi_t(x))],\pbf^* \beta(t,x)} \\
			&=\alpha'(t) \, \dprb{ (\Psi^{-1})_*[\vec{T}(t,\Phi_t(x))],\pbf^* \beta(t,x)}
		\end{align*}
		Recalling that $\nu=(\Psi^{-1})_\#\mu$, and using~\eqref{eq:def_Psi}, we also obtain that $\nu=\L^1(\dd t) \otimes (\Phi_{-t})_\#\|T_t\|$.
		
		We next observe that the maps $\pbf\circ \Psi^{-1}$ and $\Phi_{-t}\circ \pbf$ coincide on $\{t\}\times \R^d$. Accordingly, given any $k$-vector $v\in \Wedge_k(\{0\}\times\R^d)$, we have
		\[
		(\pbf\circ \Psi^{-1})_*v=(\Phi_{-t}\circ \pbf)_*v.
		\]
		In particular, we can apply the previous equality with $v=\vec{T}(t,\Phi_t(x))$ to obtain
		\begin{align*}
			(\pbf\circ\Psi^{-1})_*[\vec{T}(t,\Phi_t(x))]&=  (\Phi_{-t}\circ\pbf)_*[\vec{T}(t,\Phi_t(x))]\\
			&=  (\Phi_{-t})_*\pbf_*[\vec{T}(t,\Phi_t(x))]\\
			&=(\Phi_{-t})_*[\vec{T}_t(\Phi_t(x))],
		\end{align*}
		and therefore
		\begin{align*}
			\dprb{ W, \tbf^* d\alpha \wedge\pbf^* \beta }
			& = \int_{(0,1)\times\R^d} \dprb{ w,\tbf^* d\alpha \wedge\pbf^* \beta } \;\dd\nu \\
			&= \int_0^1 \alpha'(t)\int_{\R^d}  \dprb{ (\Psi^{-1})_*[\vec{T}(t,\Phi_t(x))],\pbf^* \beta(t,x)}\;\dd(\Phi_{-t})_\#\|T_t\|(x)\;\dd t\\
			&= \int_0^1 \alpha'(t)\int_{\R^d}  \dprb{ (\Phi_{-t})_*[\vec{T}_t(\Phi_t(x))],\beta(x)}\;\dd(\Phi_{-t})_\#\|T_t\|(x)\;\dd t\\
			&= \int_0^1 \alpha'(t) \, \dprb{ (\Phi_{-t})_* T_t,\beta} \;\dd t,
		\end{align*}
		where we used the explicit formula for the pushforward of currents given by Lemma~\ref{lemma:pushforward_currents}.

		\medskip\noindent\textit{Step 3.}
		By Step 1 and Step 2 we have shown that 
		\[
		\int_0^1 \alpha'(t) \, \dprb{ (\Phi_{-t})_* T_t,\beta} \;\dd t=0\qquad\text{for every $\alpha\in\Dscr^0((0,1))$ and $\beta\in\Dscr^k(\R^d)$.}
		\]
		This implies that, for any fixed $\beta\in\Dscr^k(\R^d)$, the map $t\mapsto \dprb{ (\Phi_{-t})_* T_t,\beta} $ is constant, and in particular equal to its value at 0 (in this context recall that we have chosen the weakly$^*$-continuous representative of $t\mapsto T_t$). We conclude that $(\Phi_{-t})_* T_t = \overline T$, or equivalently, $T_t=(\Phi_t)_* \overline T$, which is the conclusion of the proposition in the case $\partial T_t = 0$ ($t \geq 0$).

		\medskip\noindent\textit{Step 4.} Assume now that $(T_t)_{t \in (0,1)}$ is a solution of~\eqref{eq:initial_value_problem} with $T_t$ normal but not necessarily boundaryless. Testing~\eqref{eq:PDE_weak_formulation} with exact forms $\omega=d\eta$, we see that the boundaries $(\partial T_t)_{t\in (0,1)}$ solve the geometric transport equation with the same driving vector field $b$ and initial datum $\partial \overline{T}$. Since the $\partial T_t$ are boundaryless, by Step 3 we obtain that 
		\[
		\partial T_t=(\Phi_t)_* (\partial\overline{T})=\partial ((\Phi_t)_*\overline{T}).
		\]
		Consider now $S_t:=T_t-(\Phi_t)_* \overline{T}$. The $S_t$ are normal currents with $\partial S_t=0$, and by the linearity of~\eqref{eq:GTE}, they still solve the same equation. Since $S_0=0$, again by Step 3 we conclude that $S_t=0$ for every $t \in (0,1)$, hence $T_t=(\Phi_t)_* \overline{T}$ for every $t \in (0,1)$.
	\end{proof}

	\section{Uniqueness for the continuity equation} \label{sc:continuity}
	
	In this section we present a simplified proof of uniqueness for the transport of $0$-currents, i.e., signed measures advected via the continuity equation. Our proof differs from the classical one that can be found, e.g., in~\cite[Proposition~8.3.1]{AGS}. In fact, the uniqueness proof below is even slightly different to a mere specialisation of Proposition~\ref{prop:uniqueness_currents} to the case $k = 0$. Still, the decomposability bundle plays a key role, albeit in a different way compared to Proposition~\ref{prop:uniqueness_currents}: Instead of using the pushforward characterisation of Lemma~\ref{lemma:pushforward_currents}, we rely on an approximation result from~\cite{AM}, which allows to approach the directional derivatives of Lipschitz functions in the directions of the decomposability bundle with those of $\Crm^1$ functions.
	
	As shown in~\cite[Section~3.4]{BDR}, in the case of $0$-currents the geometric transport equation~\eqref{eq:GTE} reduces to the continuity equation 
	\begin{equation}\label{eq:continuity}
		\frac{\dd}{\dd t} \mu_t + \dive(b\mu_t) = 0
	\end{equation}
	where $(\mu_t)_{t \in(0,1)}$, is a family of signed measures. We understand this equation in the usual distributional sense, i.e.
	\begin{equation}\label{eq:continuity_distro}
		\int_0^1 \int_{\R^d} \partial_t \psi(t,x) + b(x) \cdot \nabla \psi(t,x) \;\dd \mu_t(x) \;\dd t = 0
	\end{equation}
	for all $\psi \in \Crm^1_c((0,1) \times \R^d)$.
	Setting $\mu := \mathscr L^1(\dd t) \otimes \mu_t(\dd x)$, two equivalent ways of formulating the PDE are the following:
	\begin{equation}\label{eq:PDE_nabla_tilde}
		\int_{(0,1) \times \R^d} (1,b(x)) \cdot \tilde{\nabla} \psi(t,x)\;  \dd \mu(t,x) = 0
	\end{equation}
	for all $\psi \in \Crm_c^1((0,1) \times \R^d)$,
	where $\tilde\nabla \psi(t,x):=(\partial_t \psi(t,x),\nabla\psi(t,x))$. Equivalently,
	\begin{equation}\label{eq:PDE_d}
		\int_{(0,1) \times \R^d} D\psi(t,x)[(1,b(x))]\;\dd \mu(t,x) = 0
	\end{equation}
	for all $\psi \in \Crm_c^1((0,1) \times \R^d)$.

	\begin{proposition}[Uniqueness]\label{prop:uniqueness_measures}
		Let $b\colon \R^d \to \R^d$ be a globally bounded and Lipschitz vector field, and let $\Phi$ be the corresponding flow.  Moreover, let $\overline \mu $ be a signed measure on $\R^d$. Then, a solution to
		\[
		\left\{\begin{aligned}
			\frac{\dd}{\dd t}\mu_t+\dive(b\mu_t) &=0,  \qquad t \in (0,1),\\
			\mu_0 &=\overline\mu
		\end{aligned}\right.
		\]
		is unique in the class of signed measures. Moreover, the solution is given by $\mu_t=(\Phi_t)_\#\overline \mu $.
	\end{proposition}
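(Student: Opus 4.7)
The plan is to work with the space-time measure $\mu:=\L^1(\dd t)\otimes\mu_t(\dd x)$ on $(0,1)\times\R^d$, exploit its decomposability bundle to extend the distributional formulation~\eqref{eq:PDE_d} of the continuity equation from $\Crm^1_c$ to compactly supported Lipschitz test functions, and then plug in a carefully chosen Lipschitz test function tailored to the flow $\Phi$.

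\emph{Bundle inclusion.} First I would write $\mu=s|\mu|$ via the Hahn decomposition and introduce the space-time $1$-current
\[
U := [(1,b(x))]\,s(t,x)\,|\mu|(\dd t\,\dd x)\qquad \text{on } (0,1)\times\R^d,
\]
whose mass is bounded by $(1+\|b\|_\infty)|\mu|((0,1)\times\R^d)<\infty$ (finiteness follows from~\eqref{eq:assumption_integrability_masses}, which in the $0$-current setting reduces to $\int_0^1\!\Mbf(\mu_t)\,\dd t<\infty$). Testing $\partial U$ against a $\Crm^1_c$ $0$-form produces exactly the left-hand side of~\eqref{eq:PDE_nabla_tilde}, which vanishes by assumption, so $U$ is normal with $\partial U\restrict((0,1)\times\R^d)=0$. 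Applying~\eqref{eq:span_in_bundle} to $U$, together with the equivalence $\|U\|\sim|\mu|$ (which follows from $1\leq|(1,b)|\leq 1+\|b\|_\infty$), yields
\[
\spn((1,b(x)))\subseteq V(|\mu|,(t,x))\qquad\text{for $|\mu|$-a.e.\ }(t,x).
\]

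\emph{Extension to Lipschitz test functions.} This is where I expect the main difficulty to lie. The Lusin-type approximation theorem from~\cite{AM} should supply, for any compactly supported Lipschitz function $\psi\colon(0,1)\times\R^d\to\R$, a sequence $\psi_n\in\Crm^1_c((0,1)\times\R^d)$ with uniformly bounded Lipschitz constants, converging uniformly to $\psi$ and satisfying $D\psi_n(\cdot)[(1,b)]\to D\psi(\cdot)[(1,b)]$ pointwise $|\mu|$-a.e.; the right-hand side is well-defined $|\mu|$-a.e.\ by the bundle inclusion above. Since $|\mu|$ is finite on $(0,1)\times\R^d$ and the integrands are uniformly bounded, dominated convergence in~\eqref{eq:PDE_d} applied to $\psi_n$ upgrades the weak formulation to
\begin{equation}\label{eq:cont_lip_test}
\int_{(0,1)\times\R^d} D\psi(t,x)[(1,b(x))]\;\dd\mu(t,x)=0
\end{equation}
for every compactly supported Lipschitz $\psi$ on $(0,1)\times\R^d$.

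\emph{Conclusion.} Given $\alpha\in\Crm^1_c((0,1))$ and $\beta\in\Crm^1_c(\R^d)$, I would test~\eqref{eq:cont_lip_test} against $\psi(t,x):=\alpha(t)\beta(\Phi_{-t}(x))$, which is Lipschitz and compactly supported in $(0,1)\times\R^d$. Writing $f:=\pbf\circ\Psi^{-1}$, Lemma~\ref{lemma:directional_derivative_flow}(ii) gives $D\Psi^{-1}(\Psi(t,x))[(1,b(x))]=(1,0)$, hence $Df(t,x)[(1,b(x))]=0$; the ordinary chain rule (valid because $\beta$ is $\Crm^1$ and $f$ admits the required directional derivative everywhere) then yields $D\psi(t,x)[(1,b(x))]=\alpha'(t)\beta(\Phi_{-t}(x))$. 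Substituting into~\eqref{eq:cont_lip_test} produces
\[
\int_0^1 \alpha'(t)\,\dprb{(\Phi_{-t})_\#\mu_t,\beta}\;\dd t=0
\]
for all admissible $\alpha,\beta$. Together with the weak$^*$-continuity of $t\mapsto\mu_t$, this forces $t\mapsto\dprb{(\Phi_{-t})_\#\mu_t,\beta}$ to be constant and equal to $\dprb{\overline\mu,\beta}$, giving $\mu_t=(\Phi_t)_\#\overline\mu$ for all $t\in(0,1)$. Apart from the extension step~\eqref{eq:cont_lip_test}, the remaining computations are substantially lighter than those in Proposition~\ref{prop:uniqueness_currents} since no multilinear algebra is required.
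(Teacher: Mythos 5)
Your proposal is correct and follows essentially the same route as the paper's proof: the space-time current $U=(1,b)\mu$ is shown to be normal and boundaryless so that \eqref{eq:span_in_bundle} places $(1,b(x))$ in the decomposability bundle of $\mu$, the Lusin-type approximation from \cite[Corollary~8.3]{AM} upgrades the weak formulation to Lipschitz test functions, and the test function $\alpha(t)\beta(\Phi_{-t}(x))$ yields that $t\mapsto(\Phi_{-t})_\#\mu_t$ is constant. The only (immaterial) difference is that you obtain the identity $D\psi(t,x)[(1,b(x))]=\alpha'(t)\beta(\Phi_{-t}(x))$ by combining Lemma~\ref{lemma:directional_derivative_flow}~(ii) with the chain rule, whereas the paper recomputes this directional derivative from scratch via the semigroup law and the quadratic estimate \eqref{eq:conto_fondamentale}.
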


	\subsection{A simple argument when $b$ is $\Crm^1$}
	
	We illustrate our proof idea by showing uniqueness under the additional regularity assumption that $b$ is $\Crm^1$. The key is to show directly that the solution is necessarily given by
	\[
	\mu_t=(\Phi_t)_\#\overline{\mu} 
	\]
	or, equivalently, that given a solution $(\mu_t)_{t \in (0,1)}$, the map $t\mapsto (\Phi_{-t})_\#\mu_t$ is constant. It is therefore natural to test the weak formulation~\eqref{eq:continuity_distro} with a function of the form $\psi(t,x)=\alpha(t)\beta(\Phi_{-t}(x))$. Since $b$ is $\Crm^1$ then also the flow $\Phi$ is $\Crm^1$, and thus one can differentiate $\psi$ classically. On the one hand we have that
	\begin{align*}
		\partial_t \psi(t,x)&=\alpha'(t)\beta(\Phi_{-t}(x))+\alpha(t)\nabla \beta (\Phi_{-t}(x))\cdot \frac{\dd}{\dd t}\Phi_{-t}(x)\\
		&=\alpha'(t)\beta(\Phi_{-t}(x))-\alpha(t)\nabla \beta (\Phi_{-t}(x))\cdot b(\Phi_{-t}(x)),
	\end{align*}
	where we used the defining property of $\Phi$. On the other hand,
	\begin{align*}
		b(x)\cdot\nabla_x \psi(t,x)&=\alpha(t)b(x)\cdot\big(\nabla \beta(\Phi_{-t}(x))D\Phi_{-t}(x)\big)\\
		&=\alpha(t)\nabla \beta(\Phi_{-t}(x))\cdot\big(D\Phi_{-t}(x)[b(x)]\big)\\
		&=\alpha(t)\nabla \beta(\Phi_{-t}(x))\cdot b(\Phi_{-t}(x)),
	\end{align*}
	where we used Lemma \ref{lemma:directional_derivative_flow} Point $(i)$. 
	Plugging the two terms in the weak formulation gives that
	\[
	\int_0^1 \alpha'(t) \, \dprb{ \mu_t,\beta(\Phi_{-t}(x))} \;\dd t=0
	\]
	for all $\alpha\in\Crm^1((0,1))$ and  all $\beta\in\Crm^1(\R^d)$.
	From this we deduce that $t\mapsto (\Phi_{-t})_\#\mu_t$ is constant, as required.

	\subsection{The Lipschitz case}
	We now consider the general case when $b$ is Lipschitz. Before presenting the proof we need the following key result.
	
	\begin{lemma}\label{lemma:AM}\phantom{.}
		\begin{enumerate}
			\item[(i)] In the setting above, $(1,b(x)) \in V(\mu,(t,x))$ for $\mu$-a.e.\ $(t,x)$. Therefore, every Lipschitz function $\psi:(0,1)\times\R^d\to\R$ is differentiable in direction $(1,b(x))$ for $\mu$-a.e.\ $(t,x)\in(0,1)\times\R^d$.
			\item[(ii)] The equation
			\[
			\int_{(0,1) \times \R^d} D\psi(t,x)[(1,b(x))]\;\dd \mu(t,x) = 0
			\]
			holds for every $\psi \in \Lip((0,1) \times \R^d)$, where the integral is well-defined by~(i).
		\end{enumerate}
	\end{lemma}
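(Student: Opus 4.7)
The plan is to prove both parts in turn: (i) follows from the fundamental property \eqref{eq:span_in_bundle} of the decomposability bundle applied to a well-chosen space-time 1-current, and (ii) follows from (i) together with an approximation theorem from~\cite{AM}.

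For (i), the key construction is the vector-valued measure (equivalently, 1-current)
\[
R := (1, b(x)) \, \mu \in \Dscr_1((0,1) \times \R^d),
\]
obtained by multiplying the constant-in-$t$ vector field $(1, b(x))$ against the base measure $\mu$. First I would verify that $R$ is a normal 1-current on $(0,1) \times \R^d$: its mass is finite, since $|(1, b(x))| \leq \sqrt{1 + \|b\|_\infty^2}$ and $\mu$ is finite, and it has no boundary in $(0,1) \times \R^d$ because, for $\psi \in \Crm^1_c((0,1) \times \R^d)$,
\[
\dprb{\partial R, \psi} = \dprb{R, d\psi} = \int_{(0,1) \times \R^d} (1, b(x)) \cdot \tilde\nabla \psi(t, x) \;\dd\mu(t, x) = 0
\]
by the weak formulation \eqref{eq:PDE_nabla_tilde}. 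Applying \eqref{eq:span_in_bundle} to $R$ yields $\spn(\vec R(t, x)) \subset V(\|R\|, (t, x))$ for $\|R\|$-a.e.\ $(t, x)$. Since $\vec R$ is a positive multiple of the simple 1-vector $(1, b(x))$, this span equals $\R \cdot (1, b(x))$; and since $\|R\| = |(1, b(x))| \, \mu$ with $|(1, b(x))| \in [1, \sqrt{1 + \|b\|_\infty^2}]$, the measures $\|R\|$ and $\mu$ have the same null sets, hence $V(\|R\|, \cdot) = V(\mu, \cdot)$ $\mu$-a.e. The differentiability claim is then immediate from the defining property of the decomposability bundle.

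For (ii), I would approximate the Lipschitz test function $\psi$ by $\Crm^1$ functions using an approximation theorem from~\cite{AM} that respects the decomposability bundle: for each $\eps > 0$ one obtains $\psi_\eps \in \Crm^1$ with $\mu(\{\psi \neq \psi_\eps\}) < \eps$, with Lipschitz constant controlled by $\Lip(\psi)$, and with $D\psi_\eps = D\psi$ on the coincidence set in the directions of $V(\mu, \cdot)$. Combined with (i), this gives $D\psi_\eps[(1, b(x))] = D\psi[(1, b(x))]$ for $\mu$-a.e.\ point in $\{\psi = \psi_\eps\}$, while on the complement both terms are uniformly bounded by $\Lip(\psi) \sqrt{1 + \|b\|_\infty^2}$. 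Multiplying by a standard $\Crm^1_c$ cutoff to localise to $(0,1)\times\R^d$, the hypothesis \eqref{eq:PDE_d} applies to each approximant; sending $\eps \to 0$ then yields (ii) by dominated convergence.

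The main expected obstacle is identifying the correct form of the approximation result from~\cite{AM}: what is needed is convergence of the \emph{directional} derivative along $(1, b(x))$, not merely uniform convergence of the functions. A naive mollification $\psi_n = \psi * \rho_{1/n}$ is insufficient, because at a point $(t, x)$ where $\psi$ is only differentiable in the bundle sense there is no reason for $\nabla \psi_n(t, x) \cdot (1, b(x))$ to converge to $D\psi(t, x)[(1, b(x))]$. The Lusin-type refinement in \cite{AM}, which produces $\Crm^1$ representatives coinciding with $\psi$ outside a set of small $\mu$-measure while sharing its bundle-direction derivatives on the coincidence set, is precisely tailored to circumvent this difficulty.
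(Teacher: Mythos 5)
Your proof of~(i) is exactly the paper's: one forms the $1$-current $U:=(1,b(x))\,\mu$, observes via~\eqref{eq:PDE_nabla_tilde} that it is normal and boundaryless in $(0,1)\times\R^d$, applies~\eqref{eq:span_in_bundle}, and uses that $\|U\|$ and $\mu$ are mutually absolutely continuous. For~(ii) your strategy (approximate $\psi$ by $\Crm^1$ functions adapted to the bundle, apply~\eqref{eq:PDE_d} to the approximants, conclude by dominated convergence) is also the paper's, but the paper cites \cite[Corollary~8.3]{AM} in its \emph{uniform} form --- $\psi_j\to\psi$ uniformly, $\sup_j\Lip(\psi_j)<\infty$, and $D\psi_j(t,x)[(1,b(x))]\to D\psi(t,x)[(1,b(x))]$ for $\mu$-a.e.\ $(t,x)$ --- rather than the Lusin-type coincidence-set version you describe; the latter would additionally require showing that the bundle-directional derivatives of $\psi$ and $\psi_\eps$ agree $\mu$-a.e.\ on $\{\psi=\psi_\eps\}$, which for a general (possibly singular) $\mu$ is not covered by the usual Lebesgue-density argument, so the uniform version is the correct citation and makes your final dominated-convergence step go through verbatim.
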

	
	We remark that, as a consequence of Point $(ii)$ above, one can use \emph{any} Lipschitz test function in the distributional formulation.
	
	\begin{proof}
		\noindent\textit{Ad~(i).} Let $U$ be the $1$-current in $(0,1)\times\R^d$ defined by $U:=(1,b(x)) \mu$. Then~\eqref{eq:PDE_nabla_tilde} can be understood as
		\[
		\partial U=0\qquad\text{in $(0,1)\times\R^d$,}
		\]
		hence $U$ is a normal 1-current without boundary in $(0,1)\times\R^d$. Thus, the assertion follows by~\eqref{eq:span_in_bundle}.
		
		\medskip\noindent\textit{Ad~(ii).} By~\cite[Corollary~8.3]{AM} there exists a sequence of functions $(\psi_j)_j \subset \Crm^1((0,1) \times \R^d)$ such that $\psi_j\to \psi$ uniformly, $\sup_j \Lip(\psi_j)<\infty$, and 
		\[
		D\psi_j(t,x)[(1,b(x))] \to D\psi(t,x)[(1,b(x))]\qquad\text{ for $\mu$-a.e.\ $(t,x)\in(0,1)\times\R^d$.}
		\]
		Resorting to~\eqref{eq:PDE_d} and the dominated convergence theorem we reach the conclusion.
	\end{proof}

	We finally come to the uniqueness proof.
	
	\begin{proof}[Proof of Proposition~\ref{prop:uniqueness_measures}]
		We choose as test function $\psi(t,x) := \alpha(t) \beta(\Phi_{-t}(x))$, where $\alpha \in \Crm^1_c((0,1))$ and $\beta \in \Crm^1_c(\R^d)$. Recalling that $\Phi_{-t}$ is Lipschitz, also $\psi$ is a Lipschitz function and therefore, by Lemma~\ref{lemma:AM}~(ii),
		\begin{equation}\label{eq:PDE_again}
			\int_{(0,1) \times \R^d} D\psi(t,x)[(1,b(x))]\;\dd \mu(t,x) = 0.
		\end{equation}
		Let us now compute directly the directional derivative $D\psi (t,x)[(1,b(x))]$. We claim that 
		\begin{equation}\label{eq:directional_derivative_claim}
			D\psi (t,x)[(1,b(x))]=\alpha'(t)\beta(\Phi_{-t}(x)) \qquad\text{for every $(t,x)\in(0,1)\times\R^d$.}
		\end{equation}
		We observe in passing that, for this particular choice of $\psi$, the computation below directly shows that this directional derivative exists at \textit{every} point, and not just $\mu$-almost everywhere (which instead is guaranteed for \textit{any} Lipschitz function by Lemma~\ref{lemma:AM}~(i)). We have 
		\begin{align*}
			D(\alpha(t)\beta(\Phi_{-t}(x)))[(1,b(x))]
			&=  \lim_{h \to 0} \frac{\alpha(t+h)\beta(\Phi_{-t-h}(x+hb(x))) -\alpha(t) \beta(\Phi_{-t}(x)))}{h} \\
			&=  \lim_{h \to 0} \frac{\alpha(t+h) - \alpha(t)}{h}  \beta(\Phi_{-t}(x)) \\
			&\qquad + 
			\lim_{h \to 0} \frac{\alpha(t+h)[\beta(\Phi_{-t-h}(x+hb(x))) - \beta(\Phi_{-t}(x)))}{h} \\
			&=  \alpha'(t)\beta(\Phi_{-t}(x)) + \ell. 
		\end{align*}
		Let us prove that $\ell=0$. 
		Since $\beta\in\Crm^1_c(\R^d)$, we deduce that
		\begin{align*}
			| \beta(\Phi_{-t-h}(x+hb(x))) - \beta(\Phi_{-t}(x))) | & \le \Lip(\beta) |\Phi_{-t-h}(x+hb(x))) - \Phi_{-t}(x)| \\ 
			& \le \Lip(\beta) \Lip(\Phi_{-t-h}) \|b\|_\infty \Lip(b)  h^2,
		\end{align*} 
		where we have used the semigroup law $\Phi_{-t}(x)=\Phi_{-t-h}(\Phi_h(x))$ and~\eqref{eq:conto_fondamentale}.
		Using this inequality and the fact that $\Lip(\Phi_{-t-h})$ is uniformly bounded in $h$ we conclude that $\ell=0$, and therefore~\eqref{eq:directional_derivative_claim} is proven. Thus, plugging~\eqref{eq:directional_derivative_claim} into~\eqref{eq:PDE_again}, we have
		\begin{align*}
			0  &= \int_{(0,1) \times \R^d} \alpha'(t)\beta(\Phi_{-t}(x)) \;\dd \mu(t,x) \\
			&= \int_0^1 \alpha'(t) \, \dprb{ \mu_t, \beta\circ \Phi_{-t} }  \;\dd t  \\
			&= \int_0^1 \alpha'(t) \, \dprb{ (\Phi_{-t})_{\#}\mu_t, \beta }  \;\dd t.
		\end{align*}
		This holds for every $\alpha\in \Crm^1_c((0,1))$, hence we obtain that the map $t \mapsto \dprb{ (\Phi_t)^{-1}_{\#}\mu_t, \beta }$ is constant for every $\beta$. Recalling the weak$^*$ continuity of $t\mapsto \mu_t$ it follows that
		\[
		\dprb{ (\Phi_t)^{-1}_{\#}\mu_t, \beta } =\dprb{ \mu_0, \beta }\qquad\text{for all $\beta\in \Crm^1_c(\R^d)$},
		\]
		and since $\beta$ is arbitrary we have $(\Phi_t)^{-1}_{\#}\mu_t = \mu_0$ as measures, i.e. $\mu_t = (\Phi_t)_{\#} \mu_0$.
	\end{proof}

	\begin{remark}\label{rem:no_dec_bundle} In the proof of Proposition~\ref{prop:uniqueness_measures}, the use of the decomposability bundle can be avoided.
		Indeed, for the specific test function employed in the proof, the computations therein directly show that the directional derivative $D\psi(t,x)[(1,b(x))]$ exists at every $(t,x)$ (and not just $\mu$-almost everywhere). In addition, it is easy to construct smooth approximations $\psi_j$ of $\psi$ satisfying~\cite[Corollary~8.3]{AM}: it is enough to consider smooth approximations $b^j$ of the vector field $b$ (for instance, by convolution with a kernel) and define 
		\[
		\psi_j(t,x):=\alpha(t)\beta(\Phi^j_{-t}(x)), \qquad (t,x) \in (0,1) \times \R^d
		\]
		where $\Phi^j$ denotes the flow of $b^j$. This sequence can be used to check that the distributional formulation holds true with the Lipschitz test function $\psi$. 
	\end{remark}

	\bibliographystyle{plain}
	\bibliography{biblio}

\begin{thebibliography}{10}

\bibitem{AT}
A.~Acharya and L.~Tartar.
\newblock On an equation from the theory of field dislocation mechanics.
\newblock {\em Boll. Unione Mat. Ital. (9)}, 4(3):409--444, 2011.

\bibitem{AM}
G.~Alberti and A.~Marchese.
\newblock On the differentiability of {L}ipschitz functions with respect to
  measures in the {E}uclidean space.
\newblock {\em Geom. Funct. Anal.}, 26(1):1--66, 2016.

\bibitem{AmbrosioBV}
L.~Ambrosio.
\newblock Transport equation and {C}auchy problem for {$BV$} vector fields.
\newblock {\em Invent. Math.}, 158(2):227--260, 2004.

\bibitem{AB_osgood}
L.~Ambrosio and P.~Bernard.
\newblock Uniqueness of signed measures solving the continuity equation for
  {O}sgood vector fields.
\newblock {\em Atti Accad. Naz. Lincei Rend. Lincei Mat. Appl.},
  19(3):237–245, 2008.

\bibitem{AmbrosioCrippa}
L.~Ambrosio and G.~Crippa.
\newblock Existence, uniqueness, stability and differentiability properties of
  the flow associated to weakly differentiable vector fields.
\newblock In {\em Transport equations and multi-{D} hyperbolic conservation
  laws}, volume~5 of {\em Lect. Notes Unione Mat. Ital.}, pages 3--57.
  Springer, Berlin, 2008.

\bibitem{AGS}
L.~Ambrosio, N.~Gigli, and G.~Savar\'{e}.
\newblock {\em Gradient flows in metric spaces and in the space of probability
  measures}.
\newblock Lectures in Mathematics ETH Z\"{u}rich. Birkh\"{a}user Verlag, Basel,
  second edition, 2008.

\bibitem{AA}
R.~Arora and A.~Acharya.
\newblock A unification of finite deformation {$J_2$} {Von-Mises} plasticity
  and quantitative dislocation mechanics.
\newblock {\em Journal of the Mechanics and Physics of Solids}, 143:104050,
  2020.

\bibitem{BDR}
P.~Bonicatto, G.~Del~Nin, and F.~Rindler.
\newblock Transport of currents and geometric {R}ademacher-type theorems.
\newblock 2022.

\bibitem{BG}
P.~Bonicatto and N.~A. Gusev.
\newblock Non-uniqueness of signed measure-valued solutions to the continuity
  equation in presence of a unique flow.
\newblock {\em Atti Accad. Naz. Lincei Rend. Lincei Mat. Appl.},
  30(3):511--531, 2019.

\bibitem{clop}
A.~Clop, H.~Jylh\"{a}, J.~Mateu, and J.~Orobitg.
\newblock Well-posedness for the continuity equation for vector fields with
  suitable modulus of continuity.
\newblock {\em J. Funct. Anal.}, 276(1):45–77, 2019.

\bibitem{DPL}
R.~J. DiPerna and P.-L. Lions.
\newblock Ordinary differential equations, transport theory and {S}obolev
  spaces.
\newblock {\em Invent. Math.}, 98(3):511--547, 1989.

\bibitem{Federer69book}
H.~Federer.
\newblock {\em Geometric measure theory}, volume 153 of {\em Die Grundlehren
  der mathematischen Wissenschaften}.
\newblock Springer, 1969.

\bibitem{HudsonRindler22}
T.~Hudson and F.~Rindler.
\newblock Elasto-plastic evolution of crystal materials driven by dislocation
  flow.
\newblock {\em Math. Models Methods Appl. Sci. (M3AS)}, 32:851--910, 2022.

\bibitem{KrantzParks08book}
S.~G. Krantz and H.~R. Parks.
\newblock {\em Geometric integration theory}.
\newblock Birkh\"{a}user, 2008.

\bibitem{Rindler23}
F.~Rindler.
\newblock Space-time integral currents of bounded variation.
\newblock {\em Calc. Var. Partial Differential Equations}, 62:Paper No. 54,
  2023.

\end{thebibliography}
	
\end{document}